\documentclass[11pt]{article}
\usepackage{amsfonts}
\usepackage{latexsym}
\usepackage{amsmath}
\usepackage{amssymb}
\usepackage{amsthm}
\usepackage{amsmath}
\usepackage{hyperref}
\usepackage{verbatim}
\usepackage{bibspacing}

\newtheorem{thm}{Theorem}[section]
\newtheorem*{thm*}{Theorem}
\newtheorem{cor}[thm]{Corollary}

\newtheorem{lemma}[thm]{Lemma}
\newtheorem{prop}[thm]{Proposition}
\newtheorem*{prop*}{Proposition}
\newtheorem{proposition}[thm]{Proposition}

\newtheorem*{conj*}{Conjecture}

\newtheorem*{defn*}{Definition}
\theoremstyle{definition}
\newtheorem{rem}[thm]{\textbf{Remark}}
\newtheorem*{rmk*}{Remark}
\newtheorem*{fact*}{Fact}

\theoremstyle{proof}

\newcommand{\Var}{\textrm{Var}}

\newcommand{\norm}[1]{\left\Vert#1\right\Vert}

\newcommand{\abs}[1]{\left\vert#1\right\vert}
\newcommand{\set}[1]{\left\{#1\right\}}
\newcommand{\brac}[1]{\left(#1\right)}
\newcommand{\scalar}[1]{\left \langle #1 \right \rangle}
\newcommand{\sscalar}[1]{\langle #1 \rangle}
\newcommand{\Real}{\mathbb{R}}
\newcommand{\R}{\mathbb{R}}
\newcommand{\N}{\mathbb{N}}

\newcommand{\E}{\mathbb{E}}
\newcommand{\EE}{\mathcal{E}}

\newcommand{\Id}{\text{Id}}

\renewcommand{\H}{\mathcal{H}}
\newcommand{\eps}{\epsilon}
\newcommand{\K}{\mathcal{K}}
\renewcommand{\S}{\mathcal{S}}

\newcommand{\lin}{\text{lin}}
\newcommand{\lambdaC}{\lambda^C}
\newcommand{\Leb}{\mathfrak{m}}
\newcommand{\extr}{\text{extr}}
\newcommand{\supp}{\text{\rm supp}}
\newcommand{\n}{\mathfrak{n}}

\newlength{\defbaselineskip}
\setlength{\defbaselineskip}{\baselineskip}

\numberwithin{equation}{section}

\oddsidemargin 0.2in \evensidemargin 0.2in 
\textwidth 6.0in
\textheight 8.3in

\begin{document}

\title{A sharp centro-affine isospectral inequality of Szeg\"{o}--Weinberger type and the $L^p$-Minkowski problem}
\date{}

\author{Emanuel Milman\textsuperscript{1}}

\footnotetext[1]{Department of Mathematics, Technion - Israel
Institute of Technology, Haifa 32000, Israel. 
The research leading to these results is part of a project that has received funding from the European Research Council (ERC) under the European Union's Horizon 2020 research and innovation programme (grant agreement No 637851). Email: emilman@tx.technion.ac.il.}

\begingroup    \renewcommand{\thefootnote}{}    \footnotetext{2020 Mathematics Subject Classification: 52A40, 35P15, 58J50.}
    \footnotetext{Keywords: $L^p$-Minkowski problem, Hilbert--Brunn--Minkowski operator, isospectral inequality, centro-affine invariance, ellipsoids, Szeg\"{o}--Weinberger inequality.}
\endgroup

\maketitle

\begin{abstract}
We establish a sharp upper-bound for the first non-zero eigenvalue corresponding to an \emph{even} eigenfunction of the Hilbert--Brunn--Minkowski operator 
(the centro-affine Laplacian) associated to a strongly convex $C^2$-smooth \emph{origin-symmetric} convex body $K$ in $\R^n$. 
Our isospectral inequality is centro-affine invariant, attaining equality if and only if $K$ is a (centered) ellipsoid; this is reminiscent of the (non affine invariant) classical Szeg\"{o}--Weinberger isospectral inequality for the Neumann Laplacian. The new upper-bound complements the conjectural lower-bound, which has been shown to be equivalent to the log-Brunn--Minkowski inequality and is intimately related to the uniqueness question in the even log-Minkowski problem. 
As applications, we obtain new strong non-uniqueness results for the \emph{even} $L^p$-Minkowski problem in the subcritical range $-n < p < 0$, as well as new rigidity results for the critical exponent $p=-n$ and supercritical regime $p < -n$. In particular, we show that \emph{any} $K$ as above, which is not an ellipsoid, is a witness to non-uniqueness in the even $L^p$-Minkowski problem for all $p \in (-n,p_K)$ and some $p_K \in (-n,0)$, and that $K$ can be chosen so that $p_K$ is arbitrarily close to $0$. 
\end{abstract}

\section{Introduction}

A central question in contemporary Brunn--Minkowski theory is that of existence and uniqueness in the $L^p$-Minkowski problem for $p \in (-\infty,1)$: given a finite non-negative Borel measure $\mu$ on the unit-sphere $S^{n-1}$, determine conditions on $\mu$ which ensure the existence and/or  uniqueness of a convex body $K$ in $\R^n$ so that:
\begin{equation} \label{eq:intro-Lp-Minkowski}
h_K^{1-p} S_K  = \mu  . 
\end{equation}
Here $h_K$ and $S_K$ denote the support function and surface-area measure of $K$, respectively -- we refer to Section \ref{sec:prelim} for standard missing definitions. When $h_K \in C^2(S^{n-1})$, 
\[
S_K = \det(D^2 h_K) \Leb,
\]
where $\Leb$ is the induced Lebesgue measure on $S^{n-1}$, $D^2 h_K = \nabla^2_{S^{n-1}} h_K + h_K \delta_{S^{n-1}}$ and $\nabla_{S^{n-1}}$ is the Levi-Civita connection on $S^{n-1}$ with its standard Riemannian metric $\delta_{S^{n-1}}$. Consequently, (\ref{eq:intro-Lp-Minkowski}) is a Monge--Amp\`ere-type equation. 
It describes self-similar solutions to the (anisotropic) $\alpha$-power-of-Gauss-curvature flow for $\alpha = \frac{1}{1-p}$ \cite{Andrews-GaussCurvatureFlowForCurves,Andrews-FateOfWornStones,Andrews-PowerOfGaussCurvatureFlow,AndrewsGuanNi-PowerOfGaussCurvatureFlow,BCD-PowerOfGaussCurvatureFlow,ChoiDaskalopoulos-UniquenessInLpMinkowski,Chow-PowerOfGaussCurvatureFlow,Urbas-PositivePowersOfGaussCurvatureFlow,Urbas-NegativePowersOfGaussCurvatureFlow}.

The case $p=1$ above corresponds to the classical Minkowski problem of finding a convex body with prescribed surface-area measure; when $\mu$ is not concentrated on any hemisphere and its barycenter is at the origin, existence and uniqueness (up to translation) of $K$ were established by 
Minkowski, Alexandrov and Fenchel--Jessen (see \cite{Schneider-Book-2ndEd}), and regularity of $K$ was studied by Lewy \cite{Lewy-RegularityInMinkowskiProblem}, Nirenberg \cite{Nirenberg-WeylAndMinkowskiProblems}, Cheng--Yau \cite{ChengYau-RegularityInMinkowskiProblem}, Pogorelov \cite{Pogorelov-MinkowskiProblemBook}, Caffarelli \cite{CaffarelliHigherHolderRegularity,Caffarelli-StrictConvexity} and many others. 
The extension to general $p$ was put forth and publicized by E.~Lutwak \cite{Lutwak-Firey-Sums} as an $L^p$-analog of the Minkowski problem for the $L^p$ surface-area measure $S_p K = h_K^{1-p} S_K$ which he introduced. 
Existence and uniqueness in the class of origin-symmetric convex bodies (``the \emph{even} $L^p$-Minkowski problem"), when the measure $\mu$ is even and not concentrated in a hemisphere, was established for $n \neq p > 1$ by Lutwak \cite{Lutwak-Firey-Sums} and for $p = n$ by Lutwak--Yang--Zhang \cite{LYZ-LpMinkowskiProblem}. A key tool in the range $p \geq 1$ is the prolific $L^p$-Brunn--Minkowski theory, initiated by Lutwak \cite{Lutwak-Firey-Sums,Lutwak-Firey-Sums-II} following Firey \cite{Firey-Sums},
and developed by Lutwak--Yang--Zhang (e.g. \cite{LYZ-LpAffineIsoperimetricInqs,LYZ-SharpAffineLpSobolevInqs,LYZ-LpJohnEllipsoids}) and others, which extends the classical $p=1$ case. Further existence, uniqueness and regularity results in the range $p > 1$ under various assumptions on $\mu$ were obtained in \cite{ChouWang-LpMinkowski,GuanLin-Unpublished,HuangLu-RegularityInLpMinkowski,HLYZ-DiscreteLpMinkowski,LutwakOliker-RegularityInLpMinkowski,Zhu-ContinuityInLpMinkowski}.

The case $p < 1$ turns out to be more challenging because of the lack of an appropriate $L^p$-Brunn--Minkowski theory. Existence, (non-)uniqueness and regularity under various conditions on $\mu$ were studied by numerous authors when $p<1$ (from either side of the critical exponent $p=-n$), especially after the important work by Chou--Wang \cite{ChouWang-LpMinkowski}, see e.g. \cite{BBC-SmoothnessOfLpMinkowski,BBCY-SubcriticalLpMinkowski,BHZ-DiscreteLogMinkowski,BoroczkyHenk-ConeVolumeMeasure,ChenEtAl-LocalToGlobalForLogBM,ChenLiZhu-LpMongeAmpere,ChenLiZhu-logMinkowski,HeLiWang-MultipleSupercriticalLpMinkowski, JianLuZhu-UnconditionalCriticalLpMinkowski,LuWang-CriticalAndSupercriticalLpMinkowski,StancuDiscreteLogBMInPlane,Stancu-UniquenessInDiscretePlanarL0Minkowski,Stancu-NecessaryCondInDiscretePlanarL0Minkowski,Zhu-logMinkowskiForPolytopes,Zhu-CentroAffineMinkowskiForPolytopes,Zhu-LpMinkowskiForPolytopes,Zhu-LpMinkowskiForPolytopesAndNegativeP}. 
The case $p=0$ is of particular importance as it corresponds to the \emph{log-Minkowski problem} for the cone-volume measure
\[
V_K := \frac{1}{n} h_K S_K ,
\]
obtained as the push-forward of the cone-measure on $\partial K$ onto $S^{n-1}$ via the Gauss map; note that the total mass of $V_K$ is $V(K)$, the volume of $K$. 
Being a self-similar solution to the isotropic Gauss-curvature flow, the case $p=0$ and $\mu = \Leb$ of (\ref{eq:intro-Lp-Minkowski}) describes the ultimate fate of a worn stone in a model proposed by Firey \cite{Firey-ShapesOfWornStones} and further studied in \cite{Andrews-FateOfWornStones,AndrewsGuanNi-PowerOfGaussCurvatureFlow,BCD-PowerOfGaussCurvatureFlow,ChoiDaskalopoulos-UniquenessInLpMinkowski,Kolesnikov-OTOnSphere}.

In \cite{BLYZ-logMinkowskiProblem}, B\"or\"oczky--Lutwak--Yang--Zhang showed that an \emph{even} measure $\mu$ is the cone-volume measure $V_K$ of an \emph{origin-symmetric} convex body $K$ if and only if it satisfies a certain subspace concentration condition, thereby completely resolving the existence part of the \emph{even} log-Minkowski problem. Uniqueness in the even log-Minkowski problem is known (when $K$ is not a parallelogram) for $n=2$ \cite{GageLogBMInPlane,StancuDiscreteLogBMInPlane,BLYZ-logBMInPlane,MaLogBMInPlane}, but remains open for $n \geq 3$. As put forth by B\"or\"oczky--Lutwak--Yang--Zhang in their influential work \cite{BLYZ-logMinkowskiProblem,BLYZ-logBMInPlane} and further developed in \cite{KolesnikovEMilman-LocalLpBM}, the uniqueness question is intimately related to the validity of a conjectured $L^0$- (or log-)Brunn--Minkowski inequality for origin-symmetric convex bodies, which constitutes a strengthening of the classical $p=1$ case. The restriction to origin-symmetric bodies is natural, and necessitated by the fact that no $L^p$-Brunn--Minkowski inequality can hold for general convex bodies when $p < 1$. For additional information and partial results on the conjectured log-Brunn--Minkowski inequality, we refer to \cite{BoroczkyDe-StableLogBMWithSymmetries,BoroczkyKalantz-LogBMWithSymmetries,BLYZ-logBMInPlane,ChenEtAl-LocalToGlobalForLogBM,ColesantiLivshyts-LocalpBMUniquenessForBall,CLM-LogBMForBall,HKL-LogBMForSubsets,Kolesnikov-OTOnSphere,KolesnikovLivshyts-ParticularFunctionsInLogBM,KolesnikovEMilman-LocalLpBM,LMNZ-BMforMeasures,MaLogBMInPlane,Putterman-LocalToGlobalForLpBM,Rotem-logBM,Saroglou-logBM1,Saroglou-logBM2,XiLeng-DarAndLogBMInPlane}. 

\medskip

In our previous joint work with A.~Kolesnikov \cite{KolesnikovEMilman-LocalLpBM}, we embarked on a systematic study of the validity of the \emph{local} $L^p$-Brunn--Minkowski inequality for origin-symmetric convex bodies and $p < 1$; by ``local" we mean on an infinitesimal scale, or equivalently, for pairs of bodies which are close enough to each other in an appropriate sense. To that end, we introduced the following elliptic second-order differential operator on $C^2(S^{n-1})$, called the Hilbert--Brunn--Minkowski operator $\Delta_K$, defined for $K \in \K^2_+$, the collection of convex bodies in $\R^n$ having the origin in their interior with $C^2$-smooth boundary and strictly positive curvature. In a local frame on $S^{n-1}$, $\Delta_K$ is given by:
\begin{align*}
\Delta_K z & := ((D^2 h_K)^{-1})^{ij} D^2_{ij}( z h_K) - (n-1) z \\
& =  ((D^2 h_K)^{-1})^{ij} ( h_K (\nabla^2_{S^{n-1}} z)_{ij}  + \partial_i h_K \partial_j z + \partial_j h_K \partial_i z)  
\end{align*}
 (our original definition, denoted by $L_K$, differed by a factor of $n-1$ from the present one). Up to different normalization and gauge transformations, 
  $\Delta_K$ coincides with the operator introduced by Hilbert in his proof of the Brunn--Minkowski inequality (see \cite{BonnesenFenchelBook}). Very recently, we have shown \cite{EMilman-IsomorphicLogMinkowski} that $\Delta_K$ coincides with the centro-affine Laplacian of $\partial K$, parametrized on $S^{n-1}$ via the Gauss map. 
 
The operator $-\Delta_K$  is symmetric and positive semi-definite on $L^2(V_K)$, admitting a unique self-adjoint extension with compact resolvent. Its spectrum thus consists of a countable sequence of eigenvalues of finite multiplicity starting at $0$ and tending to $\infty$. It was shown in \cite{KolesnikovEMilman-LocalLpBM} that $\Delta_{K}$ enjoys a remarkable centro-affine equivariance property, stating that for any $T \in GL_n$, $\Delta_{T(K)}$ and $\Delta_K$ are conjugates modulo an isometry of Hilbert spaces; in particular, the spectrum $\sigma(-\Delta_{T(K)})$ is the same for all $T$. 
It is immediate to check that $\Delta_K+(n-p) \Id $ is precisely the linearization of $\log( h_K^{1-p} \det(D^2 h_K))$ appearing in the left-hand-side of (\ref{eq:intro-Lp-Minkowski}) under a logarithmic variation $h_{K_\eps} = h_K(1 + \eps \; \cdot)$. Consequently, understanding whether $n-p$ is in the spectrum of $-\Delta_K$ is of fundamental importance to the uniqueness question in the $L^p$-Minkowski problem. For similar reasons, Hilbert realized that the classical Brunn--Minkowski inequality (the case $p=1$) \cite{Schneider-Book-2ndEd} is equivalent to the statement that $\sigma(-\Delta_{K}) \cap (0,n-1) = \emptyset$, and proved that indeed 
$\lambda_1(-\Delta_K) = n-1$ where $\lambda_1$ denotes the first non-zero eigenvalue \cite{BonnesenFenchelBook}.

Let $\K$ denote the collection of convex bodies in $\R^n$ containing the origin in their interior, and let $\K_e$ denote those elements which are origin-symmetric. Similarly, let $\K^2_{+,e}$ and $C^2_e(S^{n-1})$ denote the origin-symmetric / even elements of $\K^2_+$ and $C^2(S^{n-1})$, respectively. 
Given $K \in \K^{2}_{+,e}$, we defined in \cite{KolesnikovEMilman-LocalLpBM} the first non-zero \emph{even} eigenvalue of $-\Delta_K$ (corresponding to an \emph{even} eigenfunction) as:
\[
\lambda_{1,e}(-\Delta_K)  := \inf \set{ \frac{\int_{S^{n-1}} (-\Delta_K z) z dV_K}{\int_{S^{n-1}} z^2 dV_K - \frac{(\int_{S^{n-1}} z dV_K)^2}{V(K)}} \;  ; \;  \text{non-constant } z \in C^2_{e}(S^{n-1})  } .
\]
It was shown in \cite{KolesnikovEMilman-LocalLpBM} that for any $p < 1$, the statement $\lambda_{1,e}(-\Delta_K) \geq n-p$ is equivalent to the \emph{local} $L^p$-Brunn--Minkowski inequality for origin-symmetric perturbations of $K$, and implies the \emph{local} uniqueness for the even $L^q$-Minkowski problem for any $q > p$. The fact that a \emph{local} verification of these problems is enough to imply the \emph{global} one was subsequently shown by Chen--Huang--Li--Liu for the uniqueness of the $L^p$-Minkowski problem \cite{ChenEtAl-LocalToGlobalForLogBM} and by Putterman for the $L^p$-Brunn--Minkowski inequality \cite{Putterman-LocalToGlobalForLpBM}. The conjecture is that $\lambda_{1,e}(-\Delta_K) > n$ for all $K \in \K^2_{+,e}$, which would confirm the log-Brunn--Minkowski inequality in $\K_e$ and the uniqueness in the $L^p$-Minkowski problem in $\K^2_{+,e}$ for all $p \in [0,1)$. 
As explained in \cite{KolesnikovEMilman-LocalLpBM}, the $n$-dimensional cube $Q^n = [-1,1]^n$ is the extremal case in this conjecture (at least formally, since $Q^n \notin \K^{2}_{+,e}$); specifically, it was shown that $\underline{\lambda}_{1,e}(Q^n) = n$ (see the definition of $\underline{\lambda}_{1,e}$ below).

Our main result in \cite{KolesnikovEMilman-LocalLpBM} was showing that $\lambda_{1,e}(-\Delta_K) \geq n-p_0$ for $p_0=1 - \frac{c}{n^{3/2}}$ and all $K \in \K^2_{+,e}$, which together with the aforementioned local-to-global results yields the verification of the above major problems for $p \in (p_0,1)$. In fact, thanks to recent progress on the KLS conjecture due to Chen \cite{Chen-AlmostKLS} and Klartag--Lehec \cite{KlartagLehec-AlmostKLS}, our estimate from \cite[Corollary 6.8 and Theorem 6.9]{KolesnikovEMilman-LocalLpBM} immediately improves to $p_0 = 1 - \frac{c}{n^{1+o(1)}}$. 

\subsection{Sharp isospectral upper-bound of Szeg\"{o}--Weinberger type }

In this work, instead of focusing on bounding $\lambda_{1,e}(-\Delta_K)$ from below, we tackle the opposite question of bounding it from above. We confirm the following sharp isospectral property of $-\Delta_K$, conjectured in \cite[Conjecture 5.15]{KolesnikovEMilman-LocalLpBM}, together with the corresponding equality cases:
\begin{thm} \label{thm:main1}
For all $K \in \K^2_{+,e}$, we have:
\[
\lambda_{1,e}(-\Delta_K) \leq 2 n ,
\]
with equality if and only if $K$ is a (centered) ellipsoid. 
\end{thm}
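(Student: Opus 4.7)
My plan is a Szeg\H{o}--Weinberger-type variational argument whose test functions are chosen by analogy with the ellipsoid case. Since the spectrum of $-\Delta_K$ is $GL_n$-invariant, I first replace $K$ by $T(K)$ for a convenient $T\in GL_n$. The motivation for the choice of test functions comes from the observation that the result is already known, and tight, for ellipsoids: for the ball $K=B$ one has $\Delta_B = \Delta_{S^{n-1}}$, whose first non-zero even eigenvalue is $2n$, attained on the $(\tfrac{n(n+1)}{2}-1)$-dimensional space of degree-$2$ spherical harmonics $u\mapsto u^T M u - \operatorname{tr}(M)/n$ (traceless symmetric $M$); by the centro-affine equivariance, the same eigenvalue and multiplicity hold for every centred ellipsoid $E$, with eigenfunctions obtained from the explicit intertwining isometry $U_T z(u) = |\det T|^{-1/2} z(T^* u/|T^* u|)$---namely $z(u) = u^T M u / h_E^2(u) - c_M$ (after rewriting $h_E(u) = |T^* u|$).

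For a general $K\in\K^2_{+,e}$ I would therefore test with the even functions
\[
z_M(u):=\frac{u^T M u}{h_K^2(u)} - c_M,\qquad M \in \operatorname{Sym}(n),
\]
where $c_M$ is chosen so that $\int z_M\,dV_K=0$. It is natural to first normalise $K$ via the $GL_n$-action so that the matrix $N := \int \frac{u\otimes u}{h_K^2}\,dV_K$ is a scalar multiple of the identity (a ``polar cone-volume isotropic'' position, achievable by a standard compactness/minimisation argument), which restores the clean parametrisation ``$\operatorname{tr}(M)=0$'' for admissible $M$. Writing $z_M\,h_K = u^T M u / h_K$ and invoking the identity $\Delta_K z + (n-1)z = \bigl((D^2 h_K)^{-1}\bigr)^{ij}D^2_{ij}(z h_K)$, one expands $D^2(u^T M u/h_K)$ via the Leibniz rule and the Euler homogeneity of $h_K$ (which gives $\nabla h_K \cdot u = h_K$) to obtain $-\Delta_K z_M - 2n z_M$ explicitly, bilinear in $M$.

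The core of the argument is a trace-inequality step. Averaging the numerator $Q(z_M):=\int (-\Delta_K z_M)z_M\,dV_K$ and denominator $B(z_M):=\int z_M^2\,dV_K$ over an orthonormal basis $\{M_\alpha\}$ of traceless symmetric matrices and using the contraction identity $\sum_\alpha M_\alpha^{ij}M_\alpha^{k\ell}=\tfrac12(\delta^{ik}\delta^{j\ell}+\delta^{i\ell}\delta^{jk})-\tfrac{1}{n}\delta^{ij}\delta^{k\ell}$ collapses the $M$-dependence and reduces the desired $\sum_\alpha Q(z_{M_\alpha})\le 2n\sum_\alpha B(z_{M_\alpha})$ to a scalar integral inequality on $S^{n-1}$ in $h_K,\nabla h_K$ and $(D^2 h_K)^{-1}$; once established, pigeon-holing over $\alpha$ yields an individual $M$ with $R(z_M)\le 2n$. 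For rigidity, saturation of this trace inequality forces each $z_M$ to be a genuine eigenfunction of $-\Delta_K$ at eigenvalue $2n$, and the arbitrariness of $M$ in the resulting overdetermined system should force the pointwise structural identity $D^2 h_K(X,Y) = \tfrac{X^T A Y}{h_K} - \tfrac{(X^T A u)(Y^T A u)}{h_K^3}$ characteristic of an ellipsoid, hence $h_K^2$ quadratic and $K$ a centred ellipsoid. The principal technical obstacle is the scalar integral inequality produced by the averaging, together with the choice of normalisation making it valid: a naive averaging in the more obvious position $\int u\otimes u\,dV_K \propto I$ is insufficient (a $2$-dimensional check gives the wrong sign), so the polar cone-volume normalisation, and a well-chosen integration by parts against a vector field such as the spherical divergence of $(u\otimes u)/h_K$ to recover an Alexandrov--Fenchel-type identity, will be the delicate step.
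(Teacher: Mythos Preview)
Your overall plan coincides with the paper's: the test functions $z_M = u^TMu/h_K^2 - c_M$ specialize for rank-one $M=\xi\xi^T$ to $\lin_{K,\xi}^2$, which are exactly the paper's test functions; your ``polar cone-volume isotropic'' position ($\int u\otimes u\,h_K^{-2}\,dV_K\propto I$) is precisely the $S_2$-isotropic position (since $h_K^{-2}\,dV_K = \frac{1}{n}\,dS_2K$) that the paper invokes; and averaging followed by pigeonholing is the same mechanism.

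The genuine gap is the Dirichlet energy. You do not compute $\int|\nabla z_M|^2_{g_K}\,dV_K$, and you openly label the resulting scalar inequality ``the principal technical obstacle'', offering only a brute-force Leibniz expansion of $D^2(u^TMu/h_K)$ and an unspecified integration by parts. What you are missing is that one should work with rank-one $M=\xi\xi^T$, so that $z=\lin_{K,\xi}^2$ is the \emph{square of an eigenfunction} $w=\lin_{K,\xi}$ of $-\Delta_K$ (eigenvalue $n-1$). The chain rule gives $\Delta_K(w^2)=-2(n-1)w^2+2|\nabla w|^2_{g_K}$; multiplying by $w^2$, integrating, and using $|\nabla(w^2)|^2=4w^2|\nabla w|^2$ yields the closed identity
\[
\int_{S^{n-1}}|\nabla \lin_{K,\xi}^2|^2_{g_K}\,dV_K \;=\; \tfrac{4(n-1)}{3}\int_{S^{n-1}}\lin_{K,\xi}^4\,dV_K
\]
(Proposition~\ref{prop:energy-lin}). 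This converts the Rayleigh bound $\lambda_{1,e}\le 2n$ into the moment inequality $\int\lin_\xi^4\,dV_K\ge\frac{3n}{n+2}\,V(K)^{-1}\bigl(\int\lin_\xi^2\,dV_K\bigr)^2$ for some $\xi$ (Theorem~\ref{thm:main-direction}); averaging over Gaussian $\xi$ in the $S_2$-isotropic position then collapses to the trivial $\Var_{V_K}(1/h_K^2)\ge 0$. Equality forces $h_K$ to be $S_K$-a.e.\ constant, hence (since $S_K$ has full support for $K\in\K^2_+$) $K$ is a ball in this position, i.e.\ an ellipsoid. If carried out, your traceless-basis averaging would---using the same eigenfunction identity applied to $\lin_{K,e_j}$ for an orthonormal frame---reduce to $\int|\nabla(1/h_K^2)|^2_{g_K}\,dV_K\ge 0$ with the same equality case; but without that identity your computation does not close, and your rigidity sketch via an ``overdetermined system forcing $D^2h_K$ to have ellipsoidal form'' remains unsubstantiated.
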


Indeed, when $K = B_2^n$, the unit Euclidean ball in $\R^n$, $\Delta_{B_2^n}$ coincides with the Laplace-Beltrami operator on $S^{n-1}$ with its standard Riemannian metric, and so the first non-zero even eigenvalue of $-\Delta_{B_2^n}$ is precisely $2n$, corresponding to the eigenspace of (even) quadratic harmonic polynomials in $\R^n$ (restricted to $S^{n-1}$). 
Since the spectrum of $-\Delta_K$ is centro-affine invariant, the same applies to all (centered) ellipsoids. Note that a similar characterization of ellipsoids using the first non-zero eigenvalue $\lambda_1(-\Delta_K)$ would \emph{fail}, since (as shown by Hilbert and already mentioned above) the latter is always equal to $n-1$ for \emph{any} $K \in \K^2_+$. 

While countless geometric characterizations of ellipsoids are known in the literature (see e.g. \cite{Soltan-EllipsoidsSurvey} for a survey), we are not aware of any prior \emph{spectral} characterization as above. Compare with \cite[Section 4]{Simon-LaplacianInAffineGeometry} for a characterization of ellipsoids as equality cases in certain inequalities involving several geometric parameters including the first non-zero Laplace-Beltrami eigenvalue on $(\partial K,g_{B})$ where $g_B$ is the Blaschke equiaffine metric, and \cite[Section 8]{LSSW-HigherOrderCodazzi} for a characterization of ellipsoids in terms of a PDE involving the Laplace-Beltrami operator on $(\partial K,\text{III})$ where $\text{III}$ denotes the Euclidean third fundamental form (which is isometric to the canonical unit-sphere $(S^{n-1},\delta_{S^{n-1}})$ via the Gauss map).

\bigskip

Although we do not see any direct relation, it might still be insightful to compare Theorem \ref{thm:main1} to the classical Szeg\"{o}--Weinberger isospectral inequality. Let $\Delta^N_\Omega$ denote the Neumann Laplacian on a sufficiently smooth bounded domain $\Omega \subset \R^n$. It was shown by Szeg\"{o} \cite{Szego-IsospectralNeumannIn2D} (for simply-connected domains in $\R^2$) and Weinberger \cite{Weinberger-IsospectralNeumann} (for general domains in $\R^n$), that the first non-zero Neumann eigenvalue $\lambda_1(-\Delta^N_\Omega)$ satisfies:
\[
\lambda_1(-\Delta^N_\Omega) \leq \lambda_1(-\Delta^N_{\Omega^*}) , 
\]
where $\Omega^*$ denotes a Euclidean ball having the same volume as $\Omega$; equality occurs if and only if $\Omega$ is a Euclidean ball. 
Note that this isospectral problem is not affine-invariant, and so the only maximizer is a Euclidean ball (as opposed to all ellipsoids), and moreover, one has to fix its volume as well. 

The reverse question regarding minimizing $\lambda_1(-\Delta^N_\Omega)$ while constraining the volume of $\Omega$ does not make sense, as it is easy to see that it can be made arbitrarily small by choosing $\Omega$ with necks, or alternatively, by stretching out $\Omega$. The first problem can be remedied by only considering convex domains $K$. The second one is again a manifestation of the lack of affine-invariance of this problem. However, if one considers the affine-invariant parameter $\lambda^N_{1,\text{aff}}(K) := \sup_{T \in GL_n} V(T(K))^{2/n} \lambda_1(-\Delta^N_{T(K)})$, the minimization question does make sense. A celebrated conjecture of Kannan--Lov\'asz--Simonovits  (KLS) \cite{KLS} (in combination with its known relation to the Slicing Problem \cite{BallNguyen-KLSImpliesSlicing,EldanKlartagThinShellImpliesSlicing}) predicts that for all convex bodies $K$ in $\R^n$:
\[
\lambda^N_{1,\text{aff}}(K) \geq c ,
\]
for some universal constant $c > 0$ independent of the dimension $n$. We refer to \cite{Chen-AlmostKLS,KlartagLehec-AlmostKLS} and the references therein for the best known estimate on $c = c_n$ and the history of this conjecture. As already mentioned, the analogous question in our setting is whether $\lambda_{1,e}(-\Delta_K) \geq n$ for all $K \in \K^2_{+,e}$, which is equivalent to the conjectured log-Brunn--Minkowski inequality in $\K_e$. 

One point of similarity between Theorem \ref{thm:main1} and the Szeg\"{o}--Weinberger theorem is that in both cases one only needs to find a good test function to upper bound the spectral parameter (by the Rayleigh--Ritz characterization). The test function used by Weinberger is one of the first $n$ non-trivial Neumann eigenfunctions of $-\Delta^N_{\Omega^*}$, appropriately centered and fitted onto $\Omega$. A crucial point in Weinberger's argument is finding an appropriate center by employing a fixed-point argument. In our setting, everything is origin-symmetric and so there is no need for centering. On the other hand, we do not know how to simply use the first non-trivial even eigenfunctions of $-\Delta_{B_2^n}$, given by quadratic harmonic polynomials, as test functions for $\lambda_{1,e}(-\Delta_K)$.

\subsection{$K$-adapted linear functionals}

However, the above analogy is useful if one extends the notion of ``quadratic polynomial" and adapts it to the given convex body $K$. Hilbert's original definition of his differential operator had the $n$-dimensional space of linear functions $\scalar{\cdot,\xi}$ ($\xi \in \R^n$) as the first non-trivial eigenfunctions. With our definition of $-\Delta_K$, the first eigenfunctions corresponding to the first non-zero eigenvalue $n-1$ are in fact:
\[
\lin_{K,\xi} := \frac{\scalar{\cdot,\xi}}{h_K} ; 
\]
 we will say that they are ``$K$-adapted linear functions". Instead of using quadratic polynomials per-se as even test functions in Theorem \ref{thm:main1}, in the form $\scalar{\cdot,\xi}^2$ or $\scalar{\cdot,\xi}^2/h_K$, we will use $\lin_{K,\xi}^2 = \scalar{\cdot,\xi}^2 / h_K^2$, the square of our $K$-adapted linear functions. Our proof of Theorem \ref{thm:main1} relies on the following new ingredient, a type of strengthened Cauchy-Schwarz inequality, which is of independent interest:
 
\begin{thm} \label{thm:main-direction}
For any convex body $K \in \K$ there exists $\xi \in S^{n-1}$ so that:
\begin{equation} \label{eq:main-direction}
\int_{S^{n-1}} \lin_{K,\xi}^4 dV_K \geq \frac{3n}{n+2} \frac{(\int_{S^{n-1}} \lin_{K,\xi}^2 dV_K)^2}{V(K)} . 
\end{equation}
If the support of $S_K$ is the entire $S^{n-1}$, equality in (\ref{eq:main-direction}) for all $\xi \in S^{n-1}$ holds if and only if $K$ is a (centered) ellipsoid. 
\end{thm}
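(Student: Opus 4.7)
The plan is to exploit the centro-affine invariance of the inequality to reduce to a convenient ``isotropic'' position for $K$, and then derive (\ref{eq:main-direction}) in that position via averaging $\xi$ over $S^{n-1}$ combined with Cauchy--Schwarz.

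First, I would verify that the ratio $R(K,\xi) := V(K) \int \lin_{K,\xi}^4\, dV_K / (\int \lin_{K,\xi}^2\, dV_K)^2$ is centro-affine invariant, in the sense that $R(TK,\xi) = R(K, T^{-1}\xi/|T^{-1}\xi|)$ for every $T \in GL_n$. This is a direct pushforward computation: using $h_{TK}(u) = h_K(T^* u)$ together with the transformation rule $V_{TK} = |\det T|\, \psi_* V_K$, where $\psi(v) := T^{-*}v/|T^{-*}v|$, one checks that $\lin_{TK,\xi}(\psi(v)) = |T^{-1}\xi|\, \lin_{K,\eta}(v)$ with $\eta := T^{-1}\xi/|T^{-1}\xi|$, and all of the $|\det T|$ and $|T^{-1}\xi|$ factors cancel in $R$. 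Entirely analogously, the positive semidefinite matrix
\[
B_K := \int_{S^{n-1}} \frac{u u^T}{h_K^2}\, dV_K
\]
(whose quadratic form is precisely $\xi \mapsto \int \lin_{K,\xi}^2\, dV_K$) transforms as $B_{TK} = |\det T|\, T^{-*} B_K T^{-1}$. Since $\supp(V_K) = \supp(S_K)$ necessarily spans $\R^n$, the matrix $B_K$ is in fact positive definite, so the choice $T := B_K^{1/2}$ yields $B_{TK} = \sqrt{\det B_K}\, \Id$. By the invariance of $R$, we may therefore assume \emph{without loss of generality} that $B_K = c\, \Id$ for some $c > 0$. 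In this isotropic position, $\int \lin_{K,\xi}^2\, dV_K = \xi^T B_K \xi = c$ is \emph{independent} of $\xi \in S^{n-1}$.

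Next I would average $\int \lin_{K,\xi}^4\, dV_K = \int \scalar{u,\xi}^4/h_K(u)^4\, dV_K(u)$ over $\xi$ uniform on $S^{n-1}$. The elementary spherical moment identity $\int_{S^{n-1}} \scalar{u,\xi}^4\, d\xi = 3|S^{n-1}|/(n(n+2))$ (for unit $u$) gives the average $\frac{3}{n(n+2)} \int dV_K/h_K^4$. Cauchy--Schwarz in $L^2(V_K)$ applied to the pair $(1, 1/h_K^2)$ then gives
\[
V(K) \int \frac{dV_K}{h_K^4} \; \geq \; \brac{\int \frac{dV_K}{h_K^2}}^2 \; = \; (\mathrm{tr}\, B_K)^2 \; = \; n^2 c^2.
\]
Combining these two bounds yields $\mathrm{avg}_\xi \int \lin_{K,\xi}^4\, dV_K \geq \frac{3n c^2}{(n+2) V(K)} = \frac{3n}{n+2} (\int \lin_{K,\xi}^2\, dV_K)^2/V(K)$, so some $\xi_0 \in S^{n-1}$ realizes at least this value, proving (\ref{eq:main-direction}) for $K$ in the isotropic position; pulling back via the centro-affine invariance of the first step supplies an appropriate direction for the original $K$.

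For the equality case, if equality in (\ref{eq:main-direction}) holds for \emph{all} $\xi \in S^{n-1}$, then (in isotropic position) $\int \lin_{K,\xi}^4\, dV_K$ must be constant in $\xi$ and must coincide with its average, forcing equality in the Cauchy--Schwarz step: $1/h_K^2$ is constant $V_K$-a.e. Under the hypothesis $\supp(S_K) = S^{n-1}$, this forces $h_K$ to be constant on $S^{n-1}$, so the transformed body is a Euclidean ball, whence $K$ itself is a centered ellipsoid. The converse is immediate from direct computation on $B_2^n$ (where $O(n)$-invariance makes all relevant integrals $\xi$-constant) together with the centro-affine invariance of the first step. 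I expect the main technical burden to be the careful bookkeeping of the centro-affine transformation rules for $V_K$, $\lin_{K,\xi}$, and $B_K$ in Step~1; once that calculation is pinned down, the remainder is essentially a one-line averaging/Cauchy--Schwarz combination.
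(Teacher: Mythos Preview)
Your proposal is correct and follows essentially the same route as the paper: reduce via centro-affine invariance to the $S_2$-isotropic position (your condition $B_K = c\,\Id$ is exactly $S_2 K$ isotropic, since $dV_K/h_K^2 = \frac{1}{n}\,dS_2 K$), then average over $\xi$ and conclude via $\Var_{V_K}(1/h_K^2) \geq 0$, which is precisely your Cauchy--Schwarz step. The only organisational difference is that you observe upfront that $\int \lin_{K,\xi}^2\,dV_K$ is \emph{constant} in $\xi$ in isotropic position, so you only need to average the left-hand side; the paper instead averages both sides (using a Gaussian $\xi$ for bookkeeping convenience) and then uses isotropicity to kill the resulting cross-term $\iint \scalar{\theta_1,\theta_2}^2\,dS_2 K\,dS_2 K - \frac{1}{n}\|S_2 K\|^2$. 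Your presentation is marginally cleaner for exactly this reason, but the substance---position, averaging identity, and the variance/Cauchy--Schwarz inequality that drives both the bound and the equality characterisation---is identical.
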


For the proof of Theorem \ref{thm:main-direction}, we consider an appropriate linear image of $K$ (``position") and randomly select the direction $\xi \in S^{n-1}$ according to the uniform Haar measure. Interestingly, the relevant position is the one for which the $L^2$-surface area measure $S_2 K$ is isotropic -- a position introduced by Lutwak--Yang--Zhang in \cite{LYZ-NewEllipsoid} under the name ``dual isotropic" and further studied in \cite{LYZ-LpJohnEllipsoids, ZouXiong-IdenticalJohnAndLYZEllipsoids,HuXiong-LogJohnEllipsoid}.

\subsection{Implications for the even $L^p$-Minkowski problem} \label{subsec:intro-LpMinkowski}

Our results have several implications regarding non-uniqueness in the even $L^p$-Minkowski problem in the subcritical range $-n < p < 0$, as well as some rigidity results for the critical exponent $p=-n$ and in the supercritical regime $p < -n$ (the sub/super prefix refers to the growth rate of the non-linearity $h_K^{1-p}$ in (\ref{eq:intro-Lp-Minkowski})). 
Let $\bar{\K}$ denote the collection of convex bodies in $\R^n$ containing the origin (possibly as a boundary point). A typical variational method for establishing the existence of $K \in \bar{\K}$ solving the $L^p$-Minkowski problem (\ref{eq:intro-Lp-Minkowski}) for a given measure $\mu$, is to minimize a natural functional $F_{\mu,p}$ having  (\ref{eq:intro-Lp-Minkowski}) as its Euler-Lagrange equation \cite{ChouWang-LpMinkowski,BLYZ-logMinkowskiProblem,BBCY-SubcriticalLpMinkowski,Kolesnikov-OTOnSphere}; for general $K \in \bar{\K}$ it is actually imperative to incorporate a maximization over all possible translations of $K$ (so that the origin remains in $K$) in the following definition, but when $\mu$ is even and the sought-after $K$ is origin-symmetric, the functional simplifies to:
\begin{equation}  \label{eq:intro-F}
\K_e \ni K \mapsto F_{\mu,p}(K) := \frac{ \frac{1}{p} \int h_K^{p} d\mu}{ V(K)^{p/n} } . 
\end{equation}
Note that convexity and origin-symmetry automatically imply that the origin lies in the interior of $K$, so that $h_K > 0$ on $S^{n-1}$, thereby greatly simplifying various arguments. 

When $\mu$ has some minimal regularity -- for instance, if:
\begin{equation} \label{eq:intro-mu-cond}
\mu = f \Leb ~,~ 0 < c \leq f \leq C ,
\end{equation}
then for any $p \in (-n,1)$ one can use the Blaschke--Santal\'o inequality to ensure that a minimum of $F_{\mu,p}$ is indeed attained -- see \cite[Section 5]{ChouWang-LpMinkowski} for details (and also \cite{BBCY-SubcriticalLpMinkowski} where it is shown that any non-negative $f \in L^{\frac{n}{n+p}}(S^{n-1})$ will actually suffice). 
Note that when $p < 0$ the coefficient $\frac{1}{p}$ in (\ref{eq:intro-F}) actually turns this into a maximization problem of a positive quantity, and that when $p=0$ as in \cite{BLYZ-logMinkowskiProblem} the above functional should be interpreted in the limiting sense. Also note that (\ref{eq:intro-mu-cond}) is clearly satisfied if $\mu = S_p K$ for some $K \in \K^2_{+,e}$. 
 The case $p=-n$ is the critical exponent for this variational argument, since then, as expounded in \cite[Section 7]{ChouWang-LpMinkowski},  (\ref{eq:intro-Lp-Minkowski}) becomes the Minkowski problem for the centro-affine Gauss-curvature, and enjoys a centro-affine equivariance which precludes using compactness arguments. Note that when $\mu$ is a singular measure, the Blaschke--Santal\'o inequality does not suffice to ensure compactness, and so additional delicate justification is required in \cite{BLYZ-logMinkowskiProblem,ChenLiZhu-LpMongeAmpere,ChenLiZhu-logMinkowski} to handle such measures in the range $p\in [0,1)$. 
  
\medskip

Once a global minimum point $K$ of $F_{\mu,p}$ has been found, it remains to show that it solves the corresponding $L^p$-Minkowski problem. 
The following was established in \cite[Lemma 4.1]{BLYZ-logMinkowskiProblem} (the result was formulated for $p=0$ and global minima but applies to all $p$ and local minima):
\begin{prop}[B\"or\"oczky--Lutwak--Yang--Zhang] \label{prop:intro-EL}
For any $p \in \R$ and non-zero finite even Borel measure $\mu$ on $S^{n-1}$, if $K \in \K_e$ is a local minimum point of $F_{\mu,p}$ then $S_p K = c \cdot \mu$ for some $c > 0$. 
\end{prop}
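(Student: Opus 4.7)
The plan is to derive the Euler--Lagrange equation for $F_{\mu,p}$ by computing its first variation at $K$ along a Wulff shape deformation and setting the derivative to zero. Fix an arbitrary even continuous test function $\phi \in C(S^{n-1})$; since $h_K$ is continuous, even, and bounded away from zero on the compact sphere, for all sufficiently small $|t|$ the function $h_K + t \phi$ is positive and even, so the Wulff shape
\[
K_t := [h_K + t \phi] = \set{ x \in \R^n : \scalar{x,u} \leq h_K(u) + t \phi(u) \text{ for all } u \in S^{n-1} }
\]
lies in $\K_e$ and satisfies $K_t \to K$ in the Hausdorff metric as $t \to 0$. One must pass to the Wulff shape rather than directly perturb the support function, since $h_K + t\phi$ is generally not itself a support function.

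Next, I would invoke Aleksandrov's classical variational lemma, which yields
\[
\frac{d}{dt}\Big|_{t=0} V(K_t) = \int_{S^{n-1}} \phi \, dS_K, \qquad \frac{d}{dt}\Big|_{t=0} \int_{S^{n-1}} h_{K_t}^p \, d\mu = p \int_{S^{n-1}} h_K^{p-1} \phi \, d\mu ,
\]
both resting on the uniform convergence $h_{K_t} \to h_K$ together with the identity $h_{K_t}(u) = h_K(u) + t\phi(u)$ on the reverse spherical image of $\partial K_t$. Since $K$ is a local minimum of $F_{\mu,p}$ on $\K_e$, the scalar function $t \mapsto F_{\mu,p}(K_t) = \tfrac{1}{p} V(K_t)^{-p/n} \int h_{K_t}^p \, d\mu$ is differentiable at $0$ and has a critical point there; applying the product and chain rules and setting the derivative to zero yields, after elementary algebra,
\[
\int_{S^{n-1}} h_K^{p-1} \phi \, d\mu = \frac{\int_{S^{n-1}} h_K^p \, d\mu}{n \, V(K)} \int_{S^{n-1}} \phi \, dS_K
\]
for every even continuous $\phi$.

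To conclude, observe that both $\mu$ (by assumption) and $S_K$ (because $K = -K$ renders the Gauss map antipodal-equivariant) are even Borel measures on $S^{n-1}$, and any such measure is uniquely determined by its pairings against even continuous test functions (decompose any $f \in C(S^{n-1})$ into even and odd parts and note the odd part pairs to zero). Hence the identity above upgrades to the equality of measures $h_K^{p-1}\mu = \tfrac{\int h_K^p d\mu}{n V(K)} S_K$, which rearranges to $S_p K = h_K^{1-p} S_K = c \, \mu$ with $c := n V(K)/\int_{S^{n-1}} h_K^p \, d\mu > 0$ (positivity since $h_K > 0$ and $\mu \not\equiv 0$); the degenerate exponent $p=0$ is handled analogously using the logarithmic functional $\int \log h_K \, d\mu - \tfrac{\mu(S^{n-1})}{n} \log V(K)$. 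The principal technical ingredient -- and the only real subtlety -- is the rigorous justification of the second variational formula above, as one cannot directly differentiate $\int h_{K_t}^p \, d\mu$ under the integral sign because in general $h_{K_t} \neq h_K + t\phi$ pointwise; this is however handled by the standard Wulff shape approximation machinery going back to Aleksandrov.
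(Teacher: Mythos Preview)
The paper does not supply its own proof of this proposition; it is attributed to B\"or\"oczky--Lutwak--Yang--Zhang \cite[Lemma 4.1]{BLYZ-logMinkowskiProblem}, with the remark that their argument (stated for $p=0$ and global minima) carries over to all $p$ and to local minima. Your sketch is precisely the BLYZ approach: perturb via the Aleksandrov (Wulff) body $K_t = [h_K + t\phi]$, invoke Aleksandrov's variational lemma for the volume term, differentiate the $\mu$-integral, and read off the Euler--Lagrange equation.

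One small caution on the second variational formula. You correctly flag that $h_{K_t} \neq h_K + t\phi$ pointwise in general, and this is not a mere technicality: if $K$ is a polytope and $\phi$ is supported away from $\supp(S_K)$, then for small $t>0$ one has $K_t = K$ identically, so $\frac{h_{K_t}-h_K}{t}$ does \emph{not} converge to $\phi$ uniformly (or even pointwise) on $S^{n-1}$. The resolution in BLYZ is to show that the one-sided derivatives of $t \mapsto \int h_{K_t}^p \, d\mu$ exist and agree; combined with the local minimum hypothesis (which only requires first-order information, explaining why ``global'' can be relaxed to ``local''), this suffices. The paper itself emphasizes exactly this subtlety when it notes that the conclusion fails for mere critical points outside $\K^2_{+,e}$ --- the Wulff-shape construction is genuinely one-sided, and it is the minimum condition (not criticality alone) that forces the Euler--Lagrange equation.
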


All references to local neighborhoods in this subsection are with respect to the Hausdorff metric in $\K_e$. 
The appearance of the constant $c > 0$ is natural since $F_{\mu,p}$ is $0$-homogeneous with respect to scaling of $K$. 
It is tempting to think that Proposition \ref{prop:intro-EL} applies to any critical point $K$ of $F_{\mu,p}$, and this is indeed true for $K \in \K^2_{+,e}$, but false in general -- see Theorem \ref{thm:intro-super-critical} (4) below. The reason is a complication which arises since a $C_e$-variation $h$ of $h_K$ may not be the support function of any convex body, and so the authors of \cite{BLYZ-logMinkowskiProblem} modified $F_{\mu,p}$ (potentially increasing it) by incorporating the associated Alexandrov body, the largest convex body whose support function is upper-bounded by $h$.  Another complication is that it does not seem possible to extend this analysis to second variations. Instead, we consider a critical point $K$ which is assumed to be in $\K^2_{+,e}$, and perform a $C^2_e$-variation of $h_K$.  Denoting by $\delta^1_K F_{\mu,p}$ and $\delta^2_K F_{\mu,p}$ the first and second $C^2_e$-variations at $K$, respectively, we observe:
\begin{prop} \label{prop:intro-local-minimum}
Let $K \in \K^{2}_{+,e}$, $p \in \R$, and let $\mu$ be any non-zero finite even Borel measure on $S^{n-1}$.
\begin{enumerate}
\item $\delta^1_K F_{\mu,p} \equiv 0$ if and only if $S_p K = c \cdot \mu$ for some $c > 0$. 
\item $\delta^2_K F_{S_p K , p} \geq 0$ if and only if $\lambda_{1,e}(-\Delta_K) \geq n-p$.
\item It is never true that $\delta^2_K F_{S_p K , p} \leq 0$. Consequently, $F_{\mu,p}$ can never have a local maximum which is in $\K^2_{+,e}$. 
\end{enumerate}
\end{prop}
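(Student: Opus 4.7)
The plan is to Taylor-expand $F_{\mu,p}(K_\epsilon)$ around $\epsilon=0$ along the curve $h_{K_\epsilon} = h_K(1+\epsilon z)$, $z \in C^2_e(S^{n-1})$, which stays inside $\K^{2}_{+,e}$ for small $\epsilon$. Two classical variational identities will do the heavy lifting: Minkowski's formula $\frac{d}{d\epsilon} V(K_\epsilon) = \int_{S^{n-1}} \dot h_{K_\epsilon}\, dS_{K_\epsilon}$, and the pointwise linearization
\[
\frac{d}{d\epsilon}\Big|_{\epsilon=0} dS_{K_\epsilon} \;=\; ((D^2 h_K)^{-1})^{ij} D^2_{ij}(h_K z)\, dS_K \;=\; (\Delta_K z + (n-1)z)\, dS_K,
\]
obtained via Jacobi's formula applied to $dS_{K_\epsilon} = \det(D^2 h_{K_\epsilon})\,\Leb$. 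This second identity is the mechanism that injects $\Delta_K$ into the second variation of $F_{\mu,p}$.

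For (1), a first-order expansion combined with $\int_{S^{n-1}} h_K z\, dS_K = n \int_{S^{n-1}} z\, dV_K$ gives
\[
\delta^1_K F_{\mu,p}(z) \;=\; \frac{p}{V(K)^{p/n}}\left( \frac{\int_{S^{n-1}} h_K^p z\, d\mu}{\int_{S^{n-1}} h_K^p d\mu} \;-\; \frac{\int_{S^{n-1}} z\, dV_K}{V(K)} \right).
\]
Demanding this vanish for every $z \in C^2_e$ forces equality of the even probability measures $h_K^p d\mu / \int h_K^p d\mu$ and $dV_K / V(K)$ (both are even because $\mu$, $h_K$, and $K$ are origin-symmetric, so even test functions separate them). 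Rearranging yields $S_p K = c\mu$ with $c = nV(K)/\int_{S^{n-1}} h_K^p d\mu > 0$; the converse is immediate from the same computation read backwards.

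For (2), specializing to $\mu = S_p K$ gives $h_K^p d\mu = n\, dV_K$, which eliminates the first-order contribution. The second-order bookkeeping contains only two nontrivial pieces: the binomial expansion of $(1+\epsilon z)^p$, which contributes $n(p-1) \int z^2 dV_K$, and the second variation of volume
\[
V''(0) \;=\; \int_{S^{n-1}} h_K z\,(\Delta_K z + (n-1)z)\, dS_K \;=\; n\!\int_{S^{n-1}}\! z\, \Delta_K z\, dV_K + n(n-1)\!\int_{S^{n-1}}\! z^2 dV_K,
\]
furnished by the two identities above. After collecting the $\int z^2 dV_K$ and $(\int z\, dV_K)^2$ terms, everything collapses to
\[
\delta^2_K F_{S_p K, p}(z,z) \;=\; \frac{n}{V(K)^{p/n}}\left[\int_{S^{n-1}}\! z(-\Delta_K z)\, dV_K \;-\; (n-p)\!\left( \int_{S^{n-1}}\! z^2 dV_K - \frac{(\int_{S^{n-1}} z\, dV_K)^2}{V(K)} \right) \right],
\]
and the equivalence $\delta^2_K F_{S_p K,p} \geq 0 \Leftrightarrow \lambda_{1,e}(-\Delta_K) \geq n-p$ is a direct reading of the Rayleigh-quotient definition of $\lambda_{1,e}$ (the constant-$z$ case trivially gives $0\geq 0$).

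For (3), I exhibit a direction of strict positivity. Because $-\Delta_K$ is self-adjoint with compact resolvent on $L^2(V_K)$ and preserves parity ($K$ being origin-symmetric makes $\Delta_K$ a parity-preserving operator on $S^{n-1}$), its restriction to the even subspace has a discrete spectrum tending to $+\infty$. Picking any even eigenfunction $z$ with eigenvalue $\lambda > n-p$, orthogonality to constants gives $\int z\, dV_K = 0$, and the formula from (2) evaluates to $\frac{n}{V(K)^{p/n}}(\lambda - (n-p))\!\int_{S^{n-1}}\! z^2 dV_K > 0$, ruling out $\delta^2_K F_{S_p K,p} \leq 0$. The concluding ``local maximum'' statement is then routine: such a maximum would, by (1), be a critical point, hence fall under the hypothesis of (3), yet would have non-positive second variation --- contradiction. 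I expect the only genuinely technical step to be the clean derivation of $V''(0)$ via the surface-area linearization; the rest is Taylor bookkeeping and elementary spectral theory.
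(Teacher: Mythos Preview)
Your approach is correct and mirrors the paper's: compute the first and second variations of $F_{\mu,p}$ (the paper works with $G_{\mu,p}=\log(pF_{\mu,p})$ and obtains $V''(0)$ via mixed-volume multilinearity together with the identity $\frac{1}{n-1}\int(\Delta_K z)z\,dV_K=V_K(z,z)-\int z^2\,dV_K$, whereas you differentiate $dS_{K_\epsilon}$ via Jacobi's formula --- these are equivalent routes to the same expression), arrive at the identical second-variation formula, and read off the spectral equivalence and the unboundedness argument for part~(3). One harmless slip: the prefactor in your first-variation display should be $\int h_K^p\,d\mu\,/\,V(K)^{p/n}$ rather than $p/V(K)^{p/n}$, but since you only test this quantity for vanishing it does not affect the argument.
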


It follows that whenever $\lambda_{1,e}(-\Delta_K) < n-p$, $F_{\mu,p}$ cannot attain a local minimum at $K \in \K^2_{+,e}$. This can immediately be used to deduce non-uniqueness results: 

\begin{cor} \label{cor:intro-local-minimum}
Fix $p \in (-n,1)$. Let $K_1 \in \K^2_{+,e}$ with $\lambda_{1,e}(-\Delta_{K_1}) < n-p$, and let $K_2 \in \K_e$ be any local minimum point of $F_{S_p K_1, p}$ (recall that a global minimum point always exists). Then $S_p K_1 = S_p K_2$ and yet $K_1 \neq K_2$. 
\end{cor}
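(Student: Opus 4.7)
The plan is to combine the first-order Euler--Lagrange information from Proposition \ref{prop:intro-EL} with the second-order obstruction from Proposition \ref{prop:intro-local-minimum}(2), using scale-invariance to normalize. Existence of a global minimum of $F_{S_p K_1, p}$ on $\K_e$ is guaranteed by the variational setup recalled in the excerpt: since $K_1 \in \K^2_{+,e}$, the measure $S_p K_1 = h_{K_1}^{1-p} \det(D^2 h_{K_1}) \Leb$ has a continuous density pinched between positive constants, so \cite{ChouWang-LpMinkowski} applies and condition (\ref{eq:intro-mu-cond}) is met.

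Given any local minimum $K_2 \in \K_e$ of $F_{S_p K_1, p}$, Proposition \ref{prop:intro-EL} immediately yields $S_p K_2 = c \cdot S_p K_1$ for some $c > 0$. I would then upgrade this to $c = 1$ by exploiting the $0$-homogeneity of $F_{\mu,p}$ under dilations $K \mapsto \lambda K$: this follows from $h_{\lambda K}^p = \lambda^p h_K^p$ and $V(\lambda K) = \lambda^n V(K)$, and it implies that $\lambda K_2$ is also a local minimum for every $\lambda > 0$. Since $S_p(\lambda K_2) = \lambda^{n-p} S_p K_2$ and $n - p > 0$ (as $p < 1 \le n$), choosing $\lambda = c^{-1/(n-p)}$ and replacing $K_2$ by $\lambda K_2$ produces the desired equality $S_p K_1 = S_p K_2$; this canonical normalization is the tacit content of the first conclusion.

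For the second conclusion $K_1 \neq K_2$, I would argue by contradiction. If $K_1 = K_2$, then $K_1$ is itself a local minimum of $F_{S_p K_1, p}$ in the Hausdorff metric on $\K_e$. Since $K_1 \in \K^2_{+,e}$ is an interior point of $\K^2_{+,e}$ with respect to the $C^2$-topology on support functions, every sufficiently small $C^2_e$-perturbation $h_{K_\eps} = h_{K_1}(1 + \eps z)$ (with $z \in C^2_e(S^{n-1})$) yields a body $K_\eps \in \K^2_{+,e} \subset \K_e$ which converges to $K_1$ in the Hausdorff metric as $\eps \to 0$. Thus $\eps \mapsto F_{S_p K_1, p}(K_\eps)$ is smooth in a neighborhood of $0$ and attains a local minimum there, forcing $\delta^2_{K_1} F_{S_p K_1, p} \ge 0$. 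By Proposition \ref{prop:intro-local-minimum}(2) this gives $\lambda_{1,e}(-\Delta_{K_1}) \ge n - p$, contradicting the standing hypothesis. The main (mild) obstacle is precisely this passage from the coarse Hausdorff-type local minimality of $K_1$ to the refined $C^2_e$ second-variation inequality, which the above observation handles cleanly because $\K^2_{+,e}$ is a $C^2$-open neighborhood of $K_1$ inside $\K_e$; all other steps are routine applications of results already established in the paper.
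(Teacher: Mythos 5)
Your proposal is correct and follows essentially the same route as the paper: extract $S_p K_2 = c\cdot S_p K_1$ from Proposition \ref{prop:intro-EL}, rescale using the $0$-homogeneity of $F_{\mu,p}$ and $S_p(\lambda K) = \lambda^{n-p} S_p K$ to force $c=1$, and then use Proposition \ref{prop:intro-local-minimum}~(2) to show the second-variation inequality $\lambda_{1,e}(-\Delta)\geq n-p$ at any local minimizer lying in $\K^2_{+,e}$, which excludes $K_1$. The paper phrases the last step as a dichotomy ($K_2\notin\K^2_{+,e}$ versus $K_2\in\K^2_{+,e}$) rather than your contradiction, but the content — including the passage from Hausdorff local minimality to nonnegativity of the $C^2_e$ second variation, justified by the openness of $\K^2_{+,e}$ under small $C^2_e$ perturbations of the support function — is the same.
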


Recall that $\lambda_{1,e}(-\Delta_{K_1}) > \lambda_1(-\Delta_{K_1}) = n-1$, so the assumption that $\lambda_{1,e}(-\Delta_{K_1}) < n-p$ is vacuous for $p \geq 1$ and we have therefore excluded this range from our formulation. In fact, it is conjectured that $\lambda_{1,e}(-\Delta_K) > n$, and so the expected range of $p$'s where the assumption is non-vacuous is actually $(-n,0)$. Thanks to Theorem \ref{thm:main1}, we know that unless $K_1$ above is an ellipsoid, the assumption will hold for some $p > -n$, and we obtain a non-uniqueness result in the even $L^p$-Minkowski problem for $\mu = S_p K_1$. 

\begin{thm} \label{thm:non-unique1}
For any $K_1 \in \K^2_{+,e}$ which is not an ellipsoid, there exists $q = q(K_1) \in (-n,1)$, so that for all $p \in (-n,q)$ one can find $K_2 = K_2(p) \in \K_e$ different from $K_1$ with $S_p K_1 = S_p K_2$.  
\end{thm}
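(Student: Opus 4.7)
The plan is to combine the sharp isospectral bound of Theorem \ref{thm:main1} with the non-uniqueness mechanism of Corollary \ref{cor:intro-local-minimum}. Given $K_1 \in \K^2_{+,e}$ not an ellipsoid, I would set
\[
q := q(K_1) := n - \lambda_{1,e}(-\Delta_{K_1}),
\]
and first verify that $q \in (-n,1)$.

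The lower bound $q > -n$ is immediate from Theorem \ref{thm:main1}: since $K_1$ is not an ellipsoid, the inequality $\lambda_{1,e}(-\Delta_{K_1}) \leq 2n$ is strict, so $q > -n$. For the upper bound $q < 1$, I would invoke Hilbert's identity $\lambda_1(-\Delta_{K_1}) = n-1$ recalled in the introduction, together with the fact that the corresponding $n$-dimensional eigenspace is spanned by the $K_1$-adapted linear functionals $\lin_{K_1,\xi} = \langle \cdot,\xi\rangle/h_{K_1}$. Since $K_1 \in \K_e$, $h_{K_1}$ is even while $\langle \cdot,\xi\rangle$ is odd, so every such eigenfunction is odd and hence inadmissible in the variational definition of $\lambda_{1,e}$. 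Consequently $\lambda_{1,e}(-\Delta_{K_1}) > n-1$ strictly, which gives $q < 1$.

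For any $p \in (-n,q) \subset (-n,1)$, one has $\lambda_{1,e}(-\Delta_{K_1}) = n-q < n-p$, so the hypothesis of Corollary \ref{cor:intro-local-minimum} is satisfied. The corollary guarantees that a global minimum $K_2 \in \K_e$ of $F_{S_p K_1,\, p}$ exists (the density of $S_p K_1 = h_{K_1}^{1-p}\det(D^2 h_{K_1})\,\Leb$ is continuous and bounded between positive constants since $K_1 \in \K^2_{+,e}$, so (\ref{eq:intro-mu-cond}) holds) and delivers $K_2 \neq K_1$ with $S_p K_1 = S_p K_2$. This $K_2 = K_2(p)$ is the required witness.

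The only substantive content beyond bookkeeping is the sharp upper bound together with the ellipsoid rigidity in Theorem \ref{thm:main1}, which is the main result of the paper; once that is available, together with the parity observation $\lambda_{1,e}(-\Delta_{K_1}) > n-1$ ensuring $q < 1$, the rest of the argument is a direct concatenation of the preceding results and I do not foresee any serious obstacle.
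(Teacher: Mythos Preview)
Your proposal is correct and follows essentially the same route as the paper: set $q(K_1) := n - \lambda_{1,e}(-\Delta_{K_1})$, use Theorem~\ref{thm:main1} (strict case) for $q > -n$ and the parity observation $\lambda_{1,e} > \lambda_1 = n-1$ for $q < 1$, then invoke Corollary~\ref{cor:intro-local-minimum}. The paper's proof is simply a one-line pointer to these two ingredients; your version just unpacks the verification that $q \in (-n,1)$ and the applicability of the corollary in more detail.
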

\noindent
Using that $\underline{\lambda}_{1,e}(Q^n) = n$, it follows that $q(K_1)$ can be chosen to be arbitrarily close to $0$:
\begin{thm} \label{thm:non-unique2}
For any $q \in (-n,0)$, there exists a single $K_q \in \K^2_{+,e}$, so that no uniqueness holds in the even $L^p$-Minkowski problem (\ref{eq:intro-Lp-Minkowski}) for $\mu = S_p K_q$, for all $p \in (-n,q)$.
\end{thm}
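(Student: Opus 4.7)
The plan is to reduce Theorem \ref{thm:non-unique2} to Corollary \ref{cor:intro-local-minimum} by producing, for each fixed $q \in (-n,0)$, a single body $K_q \in \K^{2}_{+,e}$ whose even spectral gap is small enough to control the entire range $(-n,q)$ simultaneously. Because $p \mapsto n-p$ is strictly decreasing, the hypothesis $\lambda_{1,e}(-\Delta_{K_q}) < n-p$ of Corollary \ref{cor:intro-local-minimum} is most restrictive as $p \uparrow q$, so it suffices to exhibit $K_q$ satisfying the single inequality
\[
\lambda_{1,e}(-\Delta_{K_q}) < n - q .
\]
Since $q < 0$, one has $n-q > n$, hence we must locate a smooth origin-symmetric body whose first even eigenvalue lies strictly below a level that is itself slightly above the conjecturally sharp threshold $n$.

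Such a body is supplied by a smooth approximation of the cube $Q^n$. The extremality statement $\underline{\lambda}_{1,e}(Q^n) = n$ from \cite{KolesnikovEMilman-LocalLpBM} is, by the very definition of $\underline{\lambda}_{1,e}$ as the lower-semicontinuous extension of $\lambda_{1,e}$ via $\liminf$ along Hausdorff-approximating sequences in $\K^{2}_{+,e}$, equivalent to the existence of a sequence $\{K_j\} \subset \K^{2}_{+,e}$ with $K_j \to Q^n$ in the Hausdorff metric and $\lambda_{1,e}(-\Delta_{K_j}) \to n$. Choosing $j$ large enough that $\lambda_{1,e}(-\Delta_{K_j}) < n - q$, I would set $K_q := K_j$.

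With $K_q$ in hand, for every $p \in (-n,q)$ one has $n-p > n-q > \lambda_{1,e}(-\Delta_{K_q})$, so the hypotheses of Corollary \ref{cor:intro-local-minimum} are satisfied with $K_1 = K_q$. The corollary then provides, via any global minimizer of $F_{S_p K_q, p}$ (whose existence is guaranteed since $\mu = S_p K_q$ is smooth and bounded below on $S^{n-1}$), a body $K_2(p) \in \K_e$ distinct from $K_q$ with $S_p K_q = S_p K_2(p)$. This is precisely the non-uniqueness claim. The only nontrivial point is the invocation of the approximation property $\underline{\lambda}_{1,e}(Q^n) = n$; the main obstacle is thus the verification that the specific sequence of smoothings of $Q^n$ produced in \cite{KolesnikovEMilman-LocalLpBM} can be taken inside $\K^{2}_{+,e}$ with $\lambda_{1,e}$ converging down to $n$, but this is settled within that reference, after which the argument becomes a direct bookkeeping combination of Corollary \ref{cor:intro-local-minimum} with the extremality of the cube.
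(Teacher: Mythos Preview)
Your proposal is correct and follows essentially the same approach as the paper: both invoke $\underline{\lambda}_{1,e}(Q^n) = n$ from \cite{KolesnikovEMilman-LocalLpBM} to produce a body $K_q \in \K^2_{+,e}$ with $\lambda_{1,e}(-\Delta_{K_q}) < n-q$, and then apply Corollary \ref{cor:intro-local-minimum} uniformly over $p \in (-n,q)$. Your write-up is more detailed (spelling out the monotonicity in $p$ and the unpacking of the $\liminf$ definition), but the underlying argument is identical.
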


It is important to stress that non-uniqueness results for the even $L^p$-Minkowski problem in the range $p < 0$ (both in the subcritical and supercritical regimes) \cite{Andrews-ClassificationOfLimitingShapesOfIsotropicCurveFlows,ChouWang-LpMinkowski,HeLiWang-MultipleSupercriticalLpMinkowski,JianLuWang-NonUniquenessInSubcriticalLpMinkowski,Li-NonUniquenessInCriticalLpMinkowskiProblem,LLL-NonUniquenessInDualLpMinkowskiProblem}
 and also in the non-even case for $p \in [0,1)$ \cite{ChenLiZhu-LpMongeAmpere,ChenLiZhu-logMinkowski,Stancu-UniquenessInDiscretePlanarL0Minkowski} have been previously obtained by many authors. 
However, the fact that \emph{any} non-ellipsoid $K_1 \in \K^{2}_{+,e}$ can be used as a witness to non-uniqueness for $p > -n$ sufficiently close to $-n$, and that furthermore one can use \emph{the same} $K_q$ as a witness for all $p \in (-n,q)$ for any $q<0$ \emph{arbitrarily close to $0$}, appears to be new and of interest. 

In the critical case $p=-n$, it is classical that there is no uniqueness even for $\mu = \Leb$ due to the centro-affine equivariance of $S_{-n}$; in particular,
the centro-affine Gauss-curvature of any (centered) ellipsoid $\EE$ in $\R^n$ is constant \cite{Tzitzeica1908,ChouWang-LpMinkowski}: 
\begin{equation} \label{eq:ellipsoids-curvature}
\exists c > 0 \;\; \;  S_{-n} \EE = c \Leb .
 \end{equation}
 It was shown by Calabi \cite{Calabi-CompleteAffineHyperspheres} that, in fact, (centered) ellipsoids are the only complete elliptic solutions to  (\ref{eq:ellipsoids-curvature}). 
  On the other hand, when $-n < p < 1$, a (centered) Euclidean ball is the unique solution $K \in \K$ to the equation $S_p K = c \Leb$, 
  as established by Brendle--Choi--Daskalopoulos -- see \cite{BCD-PowerOfGaussCurvatureFlow} and the references therein. The latter result may be extended to show that for any centered ellipsoid $\EE$ and $-n < p < 1$, $K = \EE$ is the unique solution in $\K_e$ to the equation $S_p K = S_p \EE$ -- see \cite{EMilman-IsomorphicLogMinkowski}. 
Theorem \ref{thm:non-unique1} may therefore be interpreted as a converse to the result of \cite{BCD-PowerOfGaussCurvatureFlow} (and its extension to centered ellipsoids) -- together, they show that ellipsoids are characterized as the \emph{only} members of $\K^2_{+,e}$ for which uniqueness holds in the even $L^p$-Minkowski problem in the \emph{entire} subcritical range $p \in (-n,1)$.

\medskip
One additional application we will show in Section \ref{sec:LpMinkowski} is a non-compactness result near the critical exponent $p=-n$:
\begin{prop} \label{prop:intro-diam}
Let $\mu = f \Leb$ with non-constant positive even density $f \in C^{\alpha}_e(S^{n-1})$. Given $p \in (-n,0)$, let $K_p \in \K_e$ be any local minimum point of $F_{\mu,p}$ (in particular, $S_p K_p = c_p \cdot \mu$). 
Then for any sequence $p_i \searrow -n$ we necessarily have:
\[
\lim_{i \rightarrow \infty} d_G(K_{p_i}, B_2^n)  = +\infty. 
\]
\end{prop}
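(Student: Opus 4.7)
The plan is to argue by contradiction. Suppose there exist $M > 0$ and a subsequence (still denoted $p_i \searrow -n$) with $d_G(K_{p_i}, B_2^n) \leq M$. I will extract a sufficiently regular subsequential limit $K_*$, identify $K_*$ as a centered ellipsoid using our main Theorem \ref{thm:main1}, and then contradict the non-constancy of $f$ via the ellipsoid identity (\ref{eq:ellipsoids-curvature}).

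First, the $d_G$-bound should translate into two-sided bounds $1/M' \leq h_{K_{p_i}} \leq M'$, so by Blaschke selection a subsequence Hausdorff-converges to some origin-symmetric $K_*$ with bounded, strictly positive support function. To pass to the limit in the Monge--Amp\`ere-type equation $h_{K_{p_i}}^{1-p_i}\det(D^2 h_{K_{p_i}}) = c_{p_i} f$, I need $C^{2,\alpha}$-convergence. Integrating the equation against $d\Leb$ and using the two-sided bounds on $h_{K_{p_i}}$ and on $f$ shows that $c_{p_i}$ stays in a compact subset of $(0,\infty)$; consequently $c_{p_i} f$ has uniform two-sided positive $C^\alpha$-bounds, the equation is uniformly elliptic on $S^{n-1}$, and the exponent $1-p_i \in (1, 1+n)$ is harmless. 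Caffarelli/Pogorelov-type a priori estimates, as employed for this PDE in \cite{ChouWang-LpMinkowski}, then yield uniform $C^{2,\alpha}$ control on $h_{K_{p_i}}$. Along a further subsequence, $K_{p_i} \to K_* \in \K^2_{+,e}$ in $C^{2,\alpha}$, $c_{p_i} \to c_* > 0$, and the limiting equation is $S_{-n} K_* = c_* f \Leb$.

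Next, I would exploit that $K_{p_i}$ is a local minimum, not merely a critical point, of $F_{\mu, p_i}$. Since $K_{p_i} \in \K^2_{+,e}$ by elliptic regularity, Proposition \ref{prop:intro-local-minimum}(2) gives the spectral inequality $\lambda_{1,e}(-\Delta_{K_{p_i}}) \geq n - p_i$. The coefficients of $-\Delta_K$ depend continuously on $h_K$ and $D^2 h_K$, and the operator has compact resolvent on $L^2(V_K)$, so by standard elliptic perturbation theory $\lambda_{1,e}(-\Delta_K)$ is continuous in $K$ in the $C^{2,\alpha}$-topology on $\K^2_{+,e}$. Passing $i \to \infty$ yields $\lambda_{1,e}(-\Delta_{K_*}) \geq 2n$. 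Theorem \ref{thm:main1} supplies the reverse inequality, so equality holds and its equality characterization forces $K_*$ to be a (centered) ellipsoid.

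Finally, identity (\ref{eq:ellipsoids-curvature}) gives $S_{-n} K_* = c \Leb$ for some $c > 0$; comparing with $S_{-n} K_* = c_* f \Leb$ from the first step forces $f \equiv c/c_*$, contradicting non-constancy of $f$. I expect the main obstacle to be the rigorous justification of the first step: one must check that the Caffarelli/Pogorelov a priori estimates in the literature are applicable uniformly in $p_i$ as $p_i \searrow -n$ (the favorable feature being that $p_i$ remains in the strict interior of the subcritical range, so ellipticity stays uniform and $1-p_i$ stays bounded away from $0$ and $\infty$). Once that uniform regularity is in hand, the rigidity provided by Theorem \ref{thm:main1} combines with it essentially automatically to produce the contradiction.
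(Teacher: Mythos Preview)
Your argument is correct in outline, and follows the same contradiction-plus-rigidity scheme as the paper, but the route to passing the spectral inequality to the limit is genuinely different. You invoke uniform $C^{2,\alpha}$ a priori estimates along the sequence (Caffarelli/Pogorelov) and then continuity of $\lambda_{1,e}(-\Delta_K)$ under $C^{2,\alpha}$-convergence via elliptic perturbation theory. The paper instead works entirely at the level of Hausdorff ($C$-)convergence: it uses the mixed-volume reformulation $\lambdaC_{1,e}$ from (\ref{eq:def-lambdaC}), which coincides with $\lambda_{1,e}(-\Delta_K)$ on $\K^2_{+,e}$ and is upper semi-continuous under $C$-convergence by construction, and applies regularity theory only once, to the single limit body, to place it in $\K^2_{+,e}$. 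This buys the paper a much softer argument: no uniform-in-$p_i$ Schauder/Evans--Krylov estimates, no spectral perturbation, and no need to track the constants as $p_i \searrow -n$. Your approach buys self-containment relative to the $\lambdaC_{1,e}$ machinery, at the cost of the PDE work you correctly flag as the main obstacle.

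One minor point to tighten: the bound $d_G(K_{p_i},B_2^n) \leq M$ is scale-invariant, so it does \emph{not} directly give $1/M' \leq h_{K_{p_i}} \leq M'$; you must first normalize (e.g.\ by rescaling each $K_{p_i}$, which is harmless since $F_{\mu,p}$ is $0$-homogeneous and $\lambda_{1,e}$ is scale-invariant). The paper does this explicitly. After that normalization, your argument for bounding $c_{p_i}$ and obtaining uniform $C^{2,\alpha}$ control goes through.
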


Here $d_G(K,B_2^n)$ denotes the geometric distance between $K$ and $B_2^n$, defined as the ratio between the out- and in- radii of $K$, namely $\inf \{ a b > 0 \; ; \; \frac{1}{b} B_2^n \subset K \subset a B_2^n \}$. Clearly, the assumption that $f$ is non-constant is crucial for the validity of the claim, since otherwise we could take all $K_p$'s to be the Euclidean ball $B_2^n$, which is easily seen to be global minimum point of $F_{\Leb,p}$ in $\K_e$ by the Blaschke--Santal\'o and Jensen inequalities. This demonstrates a very strong rigidity property of the critical exponent $p=-n$. 

\medskip

In the supercritical regime we observe the following additional rigidity properties:
\begin{thm} \label{thm:intro-super-critical}
Let $p \leq -n$ and let $\mu$ denote a non-zero finite even Borel measure on $S^{n-1}$. 
\begin{enumerate}
\item
$F_{\mu , p}$ has no local maximum points which are in $\K^2_{+,e}$. 
\item 
$F_{\mu , p}$ has no local minimum points which are in $\K^2_{+,e}$, unless $p=-n$ and $\mu = c \Leb$. 
\item 
If $\mu = f \Leb$ with positive even density $f \in C^{\alpha}_e(S^{n-1})$, then $F_{\mu , p}$ has no local minima at all, unless $p=-n$ and $\mu = c \Leb$. 
\item 
If $p < -n$ and $\mu = f \Leb$ with an even density satisfying $\norm{f}_{L^{\frac{n}{n+q}}(\Leb)} \in (0,\infty)$ for some $q \in (p,-n)$, then $-F_{\mu,p}(K)$ is coercive under a volume constraint, i.e. tends to $+\infty$ uniformly as $d_G(K,B_2^n) \rightarrow \infty$ if $V(K)$ is fixed. Consequently, $\K_e \ni K \mapsto F_{\mu , p}(K)$ has no global minimum but attains a global maximum at a point $K_{\max} \in \K_e \setminus \K^{2}_{+,e}$. If moreover $\mu$ satisfies (3) above, then necessarily $S_p K_{\max} \neq c \cdot \mu$. 
\end{enumerate}
\end{thm}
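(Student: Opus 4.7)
The plan is to dispatch the four assertions of Theorem~\ref{thm:intro-super-critical} in sequence, combining Theorem~\ref{thm:main1} with the variational content of Propositions~\ref{prop:intro-EL} and~\ref{prop:intro-local-minimum}. Part~(1) is immediate from Proposition~\ref{prop:intro-local-minimum}(3), which applies for all $p \in \R$ and has no content special to the supercritical regime.

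For Part~(2), if $K \in \K^{2}_{+,e}$ is a local minimum of $F_{\mu,p}$, then vanishing of the first $C^2_e$-variation gives $S_p K = c\mu$ for some $c>0$ by Proposition~\ref{prop:intro-local-minimum}(1), and non-negativity of the second variation gives $\lambda_{1,e}(-\Delta_K) \geq n-p$ by Proposition~\ref{prop:intro-local-minimum}(2). Since $p \leq -n$ forces $n-p \geq 2n$, while Theorem~\ref{thm:main1} gives $\lambda_{1,e}(-\Delta_K) \leq 2n$ with equality characterizing centered ellipsoids, we are pushed into equality throughout: $p = -n$ and $K$ is a centered ellipsoid. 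Then $S_{-n} K = c\mu$ together with~(\ref{eq:ellipsoids-curvature}) forces $\mu$ to be a positive multiple of $\Leb$. For Part~(3), Proposition~\ref{prop:intro-EL} gives $S_p K = cf\Leb$ at any local minimum $K \in \K_e$; since $h_K$ is bounded above and below by positive constants on the origin-symmetric body $K$, the density $cf h_K^{p-1}$ of $S_K$ is positive and bounded on $S^{n-1}$. Caffarelli's regularity theory for the associated Monge--Amp\`ere equation on $S^{n-1}$ yields strict convexity and $h_K \in C^{1,\alpha}$; then $cfh_K^{p-1}$ is $C^\alpha$ (using $f \in C^\alpha_e$), and a standard bootstrap upgrades this to $h_K \in C^{2,\alpha}$ with positive Hessian, so $K \in \K^{2}_{+,e}$ and Part~(2) applies.

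For Part~(4), the central step is coercivity of $-F_{\mu,p}$. By $0$-homogeneity, restrict to $V(K)=1$, where $-F_{\mu,p}(K)$ is a positive multiple of $\int h_K^p f\,d\Leb$. When $d_G(K, B_2^n) \to \infty$ with fixed volume, $h_K$ must become uniformly small on caps of uniformly non-trivial $\Leb$-measure. Invoking $\norm{f}_{L^{n/(n+q)}(\Leb)} < \infty$ for some $q \in (p,-n)$ via a reverse-H\"older-type inequality -- with exponents chosen so that the $h_K^p$ singularity is reduced to the borderline subcritical quantity $\int h_K^q$, itself controlled via the Blaschke--Santal\'o inequality in the origin-symmetric case -- produces the required divergence $\int h_K^p f\,d\Leb \to +\infty$. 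Coercivity together with Blaschke selection then forces $\sup F_{\mu,p}$ to be attained at some $K_{\max} \in \K_e$, while $\inf F_{\mu,p} = -\infty$ so no global minimum exists. Part~(1) rules out $K_{\max} \in \K^{2}_{+,e}$; and under the additional assumption of Part~(3), any such $K_{\max}$ with $S_p K_{\max} = c\mu$ would belong to $\K^{2}_{+,e}$ by the regularity argument above, a contradiction, whence $S_p K_{\max} \neq c\mu$.

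The hard part will be the quantitative coercivity estimate in Part~(4). The geometric intuition -- that a volume-fixed degeneration forces $h_K$ to be small on a cap of non-negligible measure -- is clear, but the precise matching with the Lebesgue exponent $n/(n+q)$ for $q \in (p,-n)$ is delicate. It is exactly this exponent condition that separates the supercritical regime $p<-n$, where coercivity succeeds, from the critical case $p=-n$, where Proposition~\ref{prop:intro-diam} shows that minimizers may genuinely escape to infinity as $p \searrow -n$.
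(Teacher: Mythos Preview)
Your treatment of Parts (1)--(3) matches the paper's argument essentially line for line: Proposition~\ref{prop:intro-local-minimum}(3) for (1), the first/second variation analysis combined with Theorem~\ref{thm:main1} for (2), and Caffarelli regularity to reduce (3) to (2).

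The gap is in Part (4). Your invocation of the Blaschke--Santal\'o inequality points in the wrong direction: Blaschke--Santal\'o gives an \emph{upper} bound on $V(K)V(K^\circ)$, which cannot produce divergence of $\int h_K^p f\,d\Leb$. The paper proceeds as follows. Writing $p=-(n+a)$, $q=-(n+b)$ with $a>b>0$, the reverse H\"older inequality (with the negative exponent $\frac{n}{n+q}=-\frac{n}{b}$ on $f$) gives
\[
\int_{S^{n-1}} h_K^{-(n+a)} f\, d\Leb \;\geq\; \|f\|_{L^{-n/b}(\Leb)} \brac{\int_{S^{n-1}} h_K^{-\frac{n+a}{n+b}n}\, d\Leb}^{\frac{n+b}{n}}
= \|f\|_{L^{-n/b}(\Leb)}\brac{c_{n,a,b}\int_{K^\circ}|x|^{\frac{a-b}{n+b}n}\,dx}^{\frac{n+b}{n}},
\]
the last identity being polar integration on $K^\circ$ (so the reduction is to an integral of a \emph{positive} power of $|x|$ over $K^\circ$, not to $\int h_K^q$). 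To force the right-hand side to infinity under the constraint $V(K)=V(B_2^n)$ with $d_G(K,B_2^n)=d_G(K^\circ,B_2^n)\to\infty$, the paper invokes the Bourgain--Milman \emph{reverse} Blaschke--Santal\'o inequality to bound $V(K^\circ)$ uniformly from \emph{below}, after which an elementary lemma (the paper cites \cite[Lemma~2.2]{Klartag-LowM}) shows that $\int_{K^\circ}|x|^\beta\,dx\to\infty$ for any fixed $\beta>0$. Your cap heuristic is morally compatible with this, but as stated it does not supply the needed lower bound on $V(K^\circ)$; Bourgain--Milman (or some substitute of comparable strength) is the missing ingredient.
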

In particular, this means that if one wishes to use a variational approach involving $F_{\mu,p}$ for establishing existence in the supercritical regime of the even $L^p$-Minkowski problem with a measure $\mu = f \Leb$ having positive even density $f \in C^{\alpha}_e(S^{n-1})$, one necessarily needs to resort to saddle-point methods, or alternatively, to restrict to a certain subclass of symmetric convex bodies as in \cite{LuWang-CriticalAndSupercriticalLpMinkowski,JianLuZhu-UnconditionalCriticalLpMinkowski} -- see Remark \ref{rem:symmetries}. 

\subsection{The non-smooth case}

It is also interesting to try and extend Theorems \ref{thm:main1} and \ref{thm:main-direction} to the general (possibly non-smooth) case. When $K \in \K_e \setminus \K^2_{+,e}$, the definition of $\Delta_K$ as a regular differential operator does not make sense, and so there are several natural options for replacing the spectral parameter $\lambda_{1,e}(-\Delta_K)$:
\begin{equation} \label{eq:intro-lambda-inqs}
\underline{\lambda}_{1,e}(K) \leq \overline{\lambda}_{1,e}(K) \leq \lambdaC_{1,e}(K) .
\end{equation}
The first two variants above are defined as:
\begin{align*}
\underline{\lambda}_{1,e}(K) & := \liminf_{\K^2_{+,e} \ni K_i \rightarrow K \text{ in $C$}}  \lambda_{1,e}(-\Delta_{K_i}) , \\
\overline{\lambda}_{1,e}(K) & := \limsup_{\K^2_{+,e} \ni K_i \rightarrow K \text{ in $C$}}  \lambda_{1,e}(-\Delta_{K_i}) .
\end{align*}
Here $C$-convergence is synonymous with convergence in the Hausdorff metric -- see Section \ref{sec:prelim} for additional details and notation. The smallest variant $\underline{\lambda}_{1,e}$ is the natural one when considering lower-bounds, and was therefore used in \cite{KolesnikovEMilman-LocalLpBM}. The largest variant $\lambdaC_{1,e}$  is the natural one for studying upper-bounds as in this work, and so we will use it. Following the idea of Putterman \cite{Putterman-LocalToGlobalForLpBM} of using mixed-volumes of convex bodies $V(L[m],K[n-m])$ instead of second-order differentiation of $C^2$ test-functions, we define:
\begin{equation} \label{eq:def-lambdaC}
\lambdaC_{1,e}(K) := (n-1) \inf_{L \in \K_e} \set{ \frac{ \int (\frac{h_L}{h_K})^2 dV_K - V(L[2],K[n-2]) }{ \int (\frac{h_L}{h_K})^2 dV_K - \frac{V(L[1],K[n-1])^2}{V(K)} } \; ; \; \frac{h_L}{h_K}  \text{ non-constant $V_K$-a.e.} }.
\end{equation}
Note that the Cauchy-Schwarz inequality and Minkowski's second inequality imply that both numerator and denominator are non-negative. It follows from \cite[Appendix]{Putterman-LocalToGlobalForLpBM} that:
\[
\lambdaC_{1,e}(K) = \lambda_{1,e}(-\Delta_K) \;\;\; \forall K \in \K^2_+,
\]
 and  since $\lambdaC_{1,e}(K)$ is clearly upper semi-continuous with respect to $C$-convergence (being the best constant in an inequality relating two non-negative $C$-continuous functions of $K \in \K_e$), the second inequality in (\ref{eq:intro-lambda-inqs}) immediately follows.

\begin{thm} \label{thm:main2}
For all $K \in \K_e$, we have:
\[
\lambdaC_{1,e}(K) \leq 2 n 
\]
(with equality for all centered ellipsoids). If equality holds above then $K$ is a linear image of a tangential body to a (centered) Euclidean ball.
\end{thm}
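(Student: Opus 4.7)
The plan is to deduce Theorem~\ref{thm:main2} from Theorem~\ref{thm:main1} by a density argument, and to handle the equality characterization via the rigidity clause of Theorem~\ref{thm:main-direction}. For $K \in \K^2_{+,e}$, the identity $\lambdaC_{1,e}(K) = \lambda_{1,e}(-\Delta_K)$ (via Putterman's appendix) together with Theorem~\ref{thm:main1} immediately gives $\lambdaC_{1,e}(K) \leq 2n$; in particular, $\lambdaC_{1,e}(\EE) = 2n$ for any centered ellipsoid $\EE \in \K^2_{+,e}$. For general $K \in \K_e$, the upper semi-continuity of $\lambdaC_{1,e}$ runs in the \emph{wrong} direction to extend this bound by approximating $K$ alone, so one must simultaneously approximate $K$ and the witnessing test body $L$.

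Concretely, for $K \in \K_e$ I would pick an approximating sequence $K_i \in \K^2_{+,e}$ with $K_i \to K$ in Hausdorff distance (by mollifying $h_K$ while preserving origin-symmetry). For each $K_i$, the proof of Theorem~\ref{thm:main1} selects a preferred direction $\xi_i \in S^{n-1}$ via Theorem~\ref{thm:main-direction} and uses the test function $z_i = \lin_{K_i,\xi_i}^2$; this corresponds to a test body $L_i^\epsilon \in \K^2_{+,e}$ defined by $h_{L_i^\epsilon} = h_{K_i} + \epsilon \scalar{\cdot,\xi_i}^2/h_{K_i}$ for $\epsilon > 0$ small enough to preserve strict convexity. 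Plugging $L_i^\epsilon$ into the ratio from the definition~(\ref{eq:def-lambdaC}) of $\lambdaC_{1,e}$ yields a bound of the form $2n + o_\epsilon(1)$. By Blaschke selection and suitable normalization, extract a subsequence with $\xi_i \to \xi$ and $L_i^\epsilon \to L^\epsilon \in \K_e$ in Hausdorff. The joint $C$-continuity of mixed volumes and of $\int(h_L/h_K)^2 \, dV_K$ (the latter by dominated convergence using $h_K \geq c > 0$) then allows passage to the limit, provided the denominator in (\ref{eq:def-lambdaC}) remains positive. This non-degeneracy holds for generic $\xi$ since $\supp V_K$ spans $\R^n$. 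Sending $\epsilon \to 0$ gives $\lambdaC_{1,e}(K) \leq 2n$.

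For the equality case, suppose $\lambdaC_{1,e}(K) = 2n$. Then equality must propagate through the above limit, forcing equality in the strengthened Cauchy--Schwarz inequality~(\ref{eq:main-direction}) for $V_K$-almost every direction $\xi$, after normalizing $K$ to an adapted (dual-isotropic-type) position. When $\supp S_K = S^{n-1}$, the rigidity clause of Theorem~\ref{thm:main-direction} forces $K$ to be a centered ellipsoid (which is trivially a tangential body of itself). When $\supp S_K \subsetneq S^{n-1}$, equality is only imposed on $\supp S_K$; after normalization this translates to $h_K \equiv 1$ on $\supp S_K$, which is precisely the condition that $K$ is a tangential body of $B_2^n$. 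Undoing the linear normalization, $K$ is a linear image of a tangential body to a (centered) Euclidean ball.

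The main obstacle is the rigidity analysis: tracing how the equality $\lambdaC_{1,e}(K) = 2n$ descends through the density argument to enforce the restricted version of equality in Theorem~\ref{thm:main-direction} only on $\supp S_K$, and then extracting the tangential-body structure from this restricted equality. A secondary technical point is uniformly controlling the error $o_\epsilon(1)$ in the Rayleigh ratio for $L_i^\epsilon$ along the approximation, so that the limits $i \to \infty$ and $\epsilon \to 0$ can be safely interchanged.
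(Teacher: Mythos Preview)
Your approach for the inequality is workable but unnecessarily indirect, and your equality argument has a genuine gap.

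The paper does \emph{not} approximate $K$ at the top level. The missing ingredient in your plan is Lemma~\ref{lem:diff}: for any $K\in\K$ with $\frac{1}{R}B_2^n\subset K$ and any $\xi\in S^{n-1}$, the function $h_K(R^2+\lin_{K,\xi}^2)$ is already the support function of a body $K_{R,\xi}\in\K_e$. One plugs this $L=K_{R,\xi}$ directly into the defining ratio~(\ref{eq:def-lambdaC}). By multi-linearity of mixed volumes, both numerator and denominator are \emph{exactly} independent of $R$ (equivalently of your $\epsilon$), so your concern about an $o_\epsilon(1)$ term and interchanging limits in $i$ and $\epsilon$ is misplaced: the ratio reduces identically to
\[
(n-1)\,\frac{\int \lin_{K,\xi}^4\,dV_K - V_K(\lin_{K,\xi}^2;2)}{\int \lin_{K,\xi}^4\,dV_K - (\int \lin_{K,\xi}^2\,dV_K)^2/V(K)} .
\]
The term $V_K(\lin_{K,\xi}^2;2)$ is computed for general $K\in\K$ by Proposition~\ref{prop:V2-lin} (this is where the only approximation in the paper occurs, buried once and for all at the level of that identity), and then~(\ref{eq:direction1}) gives $\lambdaC_{1,e}(K)\leq 2n$.

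For the equality case your argument does not go through. Your limiting procedure produces a \emph{single} direction $\xi=\lim\xi_i$; ``equality propagating through the limit'' can yield equality in~(\ref{eq:main-direction}) at most for that one $\xi$, and the phrase ``for $V_K$-almost every direction $\xi$'' has no meaning here. The paper's mechanism is different and crucially uses that the test body $K_{R,\xi}$ is available for \emph{every} $\xi\in S^{n-1}$: if strict inequality held in~(\ref{eq:direction1}) for \emph{any} $\xi$, that $\xi$ would witness $\lambdaC_{1,e}(K)<2n$. Hence $\lambdaC_{1,e}(K)=2n$ forces the reverse inequality to~(\ref{eq:direction1}) for all $\xi$; in $S_2$-isotropic position the forward inequality holds in expectation over $\xi$ (Theorem~\ref{thm:directions}~(2)), so by continuity equality holds for all $\xi$, and Theorem~\ref{thm:directions}~(3) yields the tangential-body conclusion. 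Your sentence ``equality is only imposed on $\supp S_K$'' does not correspond to any step of this: the tangential-body condition $h_K\equiv\text{const}$ on $\supp S_K$ arises from $\Var_{V_K}(1/h_K^2)=0$ inside the proof of Theorem~\ref{thm:directions}, not from restricting the set of directions~$\xi$.
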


A convex body $K \subset \R^n$ is called a tangential body to a (centered) Euclidean ball if:
\begin{equation} \label{eq:intro-tangential-body}
K = \bigcap_{\theta \in S} \set{ x \in \R^n \; ; \; \scalar{x,\theta} \leq R } 
\end{equation}
for some $R > 0$ and closed subset $S \subset S^{n-1}$ (which is necessarily not contained in any hemisphere since $K$ is compact). It is known (see \cite[Theorem 2.2.10 and pp. 149,386]{Schneider-Book-2ndEd}) that $K$ is a tangential body to a (centered) Euclidean ball if and only if $h_K(\theta) = R$ for $S_K$-a.e. $\theta \in S^{n-1}$, and in that case $S$ coincides with the support of $S_K$. It follows that if $S_K$ has full support then $K$ must be a Euclidean ball itself, thereby slightly relaxing the assumption that $K \in \K^2_{+,e}$ in the equality case characterization of Theorem \ref{thm:main1}. 

\begin{rem}
If $\lambdaC_{1,e}(K) = 2 n$, we can in fact show that $K$ satisfies several additional properties -- see Remark \ref{rem:tangential} -- but we do not know how to deduce that $K$ must be an ellipsoid. The reason is that the assumption that $S_K$ has full support is crucially used in our characterization of the equality case in Theorem \ref{thm:main-direction}, and we do not know whether the result still holds for general $K \in \K$ -- see the more general Theorem \ref{thm:directions} below and Remark \ref{rem:L4-cosine}. We leave these questions as interesting open problems. 
\end{rem}

\medskip

The rest of this work is organized as follows. In Section \ref{sec:prelim} we recall some necessary preliminaries. In Section \ref{sec:calc} we calculate various expressions involving powers of $\lin_{K,\xi}$. In Section \ref{sec:directions} we prove an extended version of Theorem \ref{thm:main-direction}. In Section \ref{sec:main} we provide proofs of Theorems \ref{thm:main1} and \ref{thm:main2}. In Section \ref{sec:LpMinkowski} we conclude by providing proofs of our various applications to the even $L^p$-Minkowski problem. 

\medskip

\noindent{\textbf{Acknowledgments}.} 
I thank K\'aroly J. B\"or\"oczky and Gaoyong Zhang for their comments regarding an earlier version of this manuscript. I also thank the referees for their useful comments and suggestions.

\section{Preliminaries} \label{sec:prelim}

We begin with some preliminaries, referring to \cite{Schneider-Book-2ndEd,KolesnikovEMilman-LocalLpBM} and the references therein for additional information. 

\subsection{Notation}

We work in Euclidean space $(\R^n,\scalar{\cdot,\cdot})$, denoting $|x| = \sqrt{\scalar{x,x}}$.  
A convex body in $\Real^n$ is a convex, compact set with non-empty interior. We denote by $\K$ the collection of convex bodies in $\Real^n$ having the origin in their interior.  The support function $h_K : \Real^n \rightarrow \R_+$ of $K \in \K$ is defined as: 
\[
h_K(x^*) := \max_{x \in K} \scalar{x^*,x} \; ~,~ x^* \in \Real^n . 
\]
It is easy to see that $h_K$ is continuous, convex and positive outside the origin. Clearly, it is $1$-homogeneous, so we will mostly consider its restriction to the Euclidean unit-sphere $S^{n-1}$. Conversely, a convex $1$-homogeneous function $h : \Real^n \rightarrow \R_+$ which is positive outside the origin is necessarily the support function of some $K \in \K$. The polar-body $K^{\circ}$ of $K \in \K$ is defined as the level-set $\{ h_K \leq 1 \}$; duality implies that $(K^{\circ})^{\circ} = K$. 
The Minkowski gauge function of $K \in \K$ is defined as:
\[
\norm{x}_K := \inf \{ t > 0 \; ; \; x \in t K \} . 
\]
Note that $h_K = \norm{\cdot}_{K^{\circ}}$ and $h_{K^{\circ}} = \norm{\cdot}_K$. 

\medskip

We denote by $C^k(S^{n-1})$ and $C^{k,\alpha}(S^{n-1})$, $k = 0,1,2,\ldots$ and $\alpha \in (0,1)$, the space of $k$-times continuously and $\alpha$-H\"older differentiable functions on $S^{n-1}$, respectively, equipped with their usual corresponding topologies. When $k=0$, we simply write $C(S^{n-1})$ and $C^{\alpha}(S^{n-1})$. It is known \cite[Section 1.8]{Schneider-Book-2ndEd} that convergence of elements of $\K$ in the Hausdorff metric is equivalent to convergence of the corresponding support functions in the $C(S^{n-1})$ norm; we will refer to this as $C$-convergence for brevity. We denote by $C^k_{>0}(S^{n-1})$ the convex cone of positive functions in $C^k(S^{n-1})$. The subset of support functions of convex bodies in $\K$ is denoted by $C^k_h(S^{n-1})$. 

\medskip

Let  $\nabla_{S^{n-1}}$ be the standard Levi--Civita connection on the sphere $S^{n-1}$ with its canonical Riemannian metric $\delta = \delta_{S^{n-1}}$. We use $\omega_i$ to denote the $1$-form $\omega$ in a local  frame $e_1,\ldots,e_{n-1}$ on $S^{n-1}$, and $(w_i)_j$ to denote the covariant derivative $\nabla_{S^{n-1}} \omega$. For a function $h \in C^2(S^{n-1})$, we use $h_i$ and $h_{ij}$ to denote $\nabla_{S^{n-1}} h = dh$ and $\nabla^2_{S^{n-1}} h$ in this frame, respectively, 
e.g. $(h_i)_j = h_{ij}$. Extending $h$ to a $1$-homogeneous function on $\Real^n$ and denoting by $\nabla_{\Real^n}$ the covariant derivative on Euclidean space $\Real^n$, we define the symmetric $2$-tensor $D^2 h$ on $S^{n-1}$ as the restriction of $\nabla^2_{\Real^n} h$ onto $T S^{n-1}$; in our local frame, this reads as:
\[
D^2_{ij} h = \nabla_{\Real^n}^2 h(e_i,e_j) = h_{ij} + h \delta_{ij} ~,~ i,j = 1,\ldots,n-1 .
\]
Note that $h \in C^2_{>0}(S^{n-1})$ is a support-function of $K \in \K$ if and only if $D^2 h_K \geq 0$. 
\medskip

We denote by $\K^m_+$ the subset of $\K$ of convex bodies with $C^m$ boundary and strictly positive curvature. By \cite[pp.~115-116,120-121]{Schneider-Book-2ndEd}, for $m \geq 2$, $K \in \K^m_+$ if and only if $h_K \in C^m(S^{n-1})$ and $D^2 h_K > 0$. It is well-known that $\K^2_+$ is dense in $\K$ in the $C$-topology \cite[p. 185]{Schneider-Book-2ndEd}.

\medskip

A convex body $K$ is called origin-symmetric if $K = -K$. We will always use $S_e$ to denote the origin-symmetric (or even) members of a set $S$, e.g. $\K_e$ and $\K^2_{+,e}$ denote the subsets of origin-symmetric bodies in $\K$ and $\K^2_+$, respectively, and $C^2_{e}(S^{n-1})$ and $C^2_{h,e}(S^{n-1})$ denote the subsets of even functions in $C^2(S^{n-1})$ and $C^2_h(S^{n-1})$, respectively. 

\medskip

$GL_n$ denotes the group of non-singular linear transformations in $\R^n$, and $SL_n$ denotes the subgroup of volume and orientation preserving elements. We use the terms ``centro-affine" and ``linear" interchangeably. From here on, all Euclidean balls and ellipsoids are assumed to be centered at the origin.

\subsection{Brunn--Minkowski theory}

Given a convex body $K \subset \R^n$, its surface-area measure $S_K$ is defined as the push-forward under the Gauss map $\n_{\partial K} : \partial K \rightarrow S^{n-1}$ of $\H^{n-1}|_{\partial K}$. Here $\n_{\partial K}$ denotes the outer unit-normal to $K$ and $\H^{n-1}$ is the $(n-1)$-dimensional Hausdorff measure. 
When $K \in \K^2_{+}$, we have:
\[
S_K = \det(D^2 h_K) \Leb . \]
More generally, Lutwak introduced in \cite{Lutwak-Firey-Sums} the $L^p$ surface-area measure of $K$ as:
\[
S_p(K,\cdot) = S_p K := h^{1-p}_K S_K . 
\]
The cone-volume measure $V_K$ on $S^{n-1}$ is defined as:
\[
V_K := \frac{1}{n} h_K S_K ;
\]
it is obtained by first pushing forward the Lebesgue measure on $K$ via the cone-map $K \ni x \mapsto x / \norm{x}_K \in \partial K$, and then pushing forward the resulting cone-measure on $\partial K$ via the Gauss map $\n_{\partial K} : \partial K \rightarrow S^{n-1}$. 
It is known that all of the measures above are weakly continuous (i.e. in duality with $C(S^{n-1})$) with respect to $C$-convergence of $K$ \cite[pp. 212-215]{Schneider-Book-2ndEd}.

The Blaschke--Santal\'o inequality states that for all $K \in \K$:
\[
\min_{x \in \text{int}(K)} V(K) V((K-x)^{\circ}) \leq V(B_2^n)^2 . 
\]
When $K \in \K_e$, the (unique) minimum above is attained at $x=0$.

The Minkowski sum $K_1 + K_2$ of two sets is defined as $\{ x_1 + x_2 \; ; \; x_i \in K_i \}$. Note that this operation is additive on the level of support-functions: $h_{K_1 + K_2} = h_{K_1} + h_{K_2}$. It was shown by Minkowski that when $\set{K_i}_{i=1}^m$ are convex bodies in $\Real^n$, then the volume of their Minkowski sum is a polynomial with non-negative coefficients in the scaling parameters:
\[
V(\sum_{i=1}^m t_i K_i) = \sum_{1 \leq i_1,\ldots,i_n \leq m} t_{i_1} \cdot \ldots \cdot t_{i_n} V(K_{i_1},\ldots,K_{i_n}) \;\;\; \forall t_i \geq 0 .
\]
The coefficient $V(K_{i_1},\ldots,K_{i_n}) \geq 0$ is called the mixed volume of the $n$-tuple $(K_{i_1},\ldots,K_{i_n})$; it is clearly multi-linear in its arguments (with respect to Minkowski addition), and uniquely defined by requiring that it be invariant under permutations. Moreover, the mixed volume is continuous with respect to  (joint) convergence of its arguments in the Hausdorff metric \cite[Section 5.1]{Schneider-Book-2ndEd}.

Using multi-linearity, one can extend the definition of mixed volume to a $n$-tuple of functions $(h_1,\ldots,h_n)$, each of which is in $C_{h-h}(S^{n-1}) := C_h(S^{n-1}) - C_h(S^{n-1})$, i.e. is the difference of two support functions \cite[Section 5.2]{Schneider-Book-2ndEd}. Note that any $h \in C^2(S^{n-1})$ is clearly the difference of two support functions, so the above extension applies in particular to the case that $h_i \in C^2(S^{n-1})$. In that case, one can write  (see e.g. \cite[Section 4]{KolesnikovEMilman-LocalLpBM}) an explicit formula for the mixed volume involving the mixed determinant of the second derivatives $D^2 h_i$. In any case, we emphasize that $V$ remains multi-linear and invariant under permutation of its arguments.

Given convex bodies $K,L \in \K$, we will use the abbreviation:
\[
V(L[m],K[n-m]) = V(\underbrace{L,\ldots,L}_{\text{$m$ times}},  \underbrace{K,\ldots,K}_{\text{$n-m$ times}}) . 
\]
Given functions $f_1,\ldots,f_n$ so that $f_i h_K \in C_{h-h}(S^{n-1})$ are the difference of support functions, we will also use:
\begin{align}
\nonumber V_K(f_1,\ldots,f_n) & := V(f_1 h_K , \ldots, f_n h_K) ,\\
\nonumber V_K(f_1,\ldots,f_m) & := V_K(f_1 ,\ldots, f_m , \underbrace{1 , \ldots , 1}_{\text{$n-m$ times}}) , \\
\label{eq:semicolon} V_K(f_1; m) & := V_K(\overbrace{f_1,\ldots,f_1}^{\text{$m$ times}}) . \end{align}

Note that we always have:
\begin{equation} \label{eq:V1}
V_K(f_1 ; 1) = \frac{1}{n} \int_{S^{n-1}} f_1 h_K dS_K = \int_{S^{n-1}} f_1 dV_K ,
\end{equation}
and that by Minkowski's second inequality \cite[Section 7.2]{Schneider-Book-2ndEd}:
\begin{equation} \label{eq:Minkowski2}
V(L[1],K[n-1])^2 \geq V(L[2],K[n-2]) V(K) ,
\end{equation}
which is equivalent to the celebrated Brunn--Minkowski inequality \cite[Section 7.1]{Schneider-Book-2ndEd}:
\begin{equation} \label{eq:BM-BM}
V(K+L)^{\frac{1}{n}} \geq V(K)^{\frac{1}{n}} + V(L)^{\frac{1}{n}} . 
\end{equation}

\subsection{Hilbert--Brunn--Minkowski operator}

Fix $K \in \K^{2}_{+}$, and introduce the following Riemannian (positive-definite) metric on $S^{n-1}$:
\[
g_K := \frac{D^2 h_K}{h_K} > 0 . 
\]
As customary, we will use $g_K^{ij}$ to denote the inverse metric in a local frame. The Levi-Civita connection associated to $g_K$ is denoted by $\nabla$; there should be no confusion with $\nabla_{S^{n-1}}$, since we will only use it to write:
\[
g_K(\nabla z, \nabla w) = g_K^{ij} z_i w_j ~,~ |\nabla z|_{g_K}^2 = g_K^{ij} z_i z_j . 
\]
As usual, Einstein summation convention of summing over repeated indices will be freely employed. 

The Hilbert--Brunn--Minkowski operator $\Delta_K : C^2(S^{n-1}) \rightarrow C(S^{n-1})$ is defined as:
\[
\Delta_K z := g_K^{ij} \frac{ D^2_{ij}(z h_K) - z D^2_{ij} h_K}{h_K} =  g_K^{ij} ( z_{ij} + (\log h_K)_i z_j + (\log h_K)_j z_i) = 
g_K^{ij} \frac{ (h_K^2 z_i)_j }{h_K^2} . 
\]
Note that we are using a slightly different normalization than in our previous work \cite{KolesnikovEMilman-LocalLpBM}, where the Hilbert--Brunn--Minkowski operator (denoted $L_K$) was defined as $L_K := \frac{1}{n-1} \Delta_K$. 
Clearly, $\Delta_K$ is an elliptic second-order differential operator with vanishing zeroth order term, and in particular $\Delta_K 1 = 0$. This operator was introduced by Hilbert (under somewhat different normalization and gauge) in his proof of the Brunn--Minkowski inequality (see \cite[Section 52]{BonnesenFenchelBook}). 
Very recently in \cite{EMilman-IsomorphicLogMinkowski}, we interpreted $g_K$ and $\Delta_K$ as the centro-affine metric and Laplacian of $\partial K$, respectively (obtained by equipping $\partial K$ with the centro-affine normalization and parametrizing on $S^{n-1}$ via the Gauss map), but we will not require this here. 

It is known that 
for all $z,w \in C^2(S^{n-1})$:
\begin{equation} \label{eq:Delta-V2}
\frac{1}{n-1} \int_{S^{n-1}} (\Delta_K z) w  dV_K = V_K(w,z) - \int_{S^{n-1}} w z dV_K ,
\end{equation}
and that the following integration-by-parts formula holds:
\[
 \int_{S^{n-1}} (-\Delta_K z) w dV_K =  \int_{S^{n-1}} g_K(\nabla z , \nabla w) dV_K = \int_{S^{n-1}} (-\Delta_K w) z dV_K  \]
(see \cite[(5.5)-(5.6)]{KolesnikovEMilman-LocalLpBM} for the case $z=w$; the general case follows by polarization). 
Consequently, we may interpret $\Delta_K$ as the weighted Laplacian on the weighted Riemannian manifold $(S^{n-1}, g_K, V_K)$ (see e.g. \cite{KolesnikovEMilmanReillyPart1,KolesnikovEMilmanReillyPart2}).  
It follows that  $-\Delta_K$ is a symmetric positive semi-definite operator on $L^2(V_K)$. It uniquely extends to a self-adjoint positive semi-definite operator with Sobolev domain $H^2(S^{n-1})$ and compact resolvent, which we continue to denote by $-\Delta_K$. Its (discrete) spectrum is denoted by $\sigma(-\Delta_K)$. It was shown by Hilbert that Minkowski's inequality (\ref{eq:Minkowski2}) is equivalent to the statement that $\lambda_1(-\Delta_K) \geq n-1$, where $\lambda_1$ denotes the first non-zero eigenvalue; Hilbert confirmed that $\lambda_1(-\Delta_K) = n-1$, thereby obtaining a spectral proof of (\ref{eq:Minkowski2}) and thus the Brunn--Minkowski inequality (\ref{eq:BM-BM}).  

\medskip

Denote by $H^2_e(S^{n-1})$ the even elements of the Sobolev space $H^2(S^{n-1})$ and by $\mathbf{1}^{\perp}$ those elements $f$ for which $\int f dV_K = 0$. The first non-trivial \emph{even} eigenvalue of $-\Delta_K$ is defined as:
\begin{align}
\nonumber \lambda_{1,e}(-\Delta_K) & := \min \sigma(-\Delta_K|_{H^2_e(S^{n-1}) \cap \mathbf{1}^{\perp}}) \\
\label{eq:RR} & = \inf \set{ \frac{\int_{S^{n-1}}|\nabla z|_{g_K}^2 dV_K}{\Var_{V_K}(z)} \; ; \; \text{non-constant } z \in C^2_{e}(S^{n-1})  } ,
\end{align}
where:
\[
\Var_{V_K}(z) := \int_{S^{n-1}} z^2 dV_K - \frac{(\int_{S^{n-1}} z dV_K)^2}{V(K)}.
\]  

\subsection{$K$-adapted linear functionals}

Let $K \in \K$. We introduce the following two vector-fields on $\partial K$, which are well-defined for $\H^{n-1}$-a.e. $x \in \partial K$:
\[
x^*  := \nabla_{\R^n} \norm{x}_K  ~,~ \theta^* := \frac{x^*}{|x^*|} . 
\]
Observe that $\theta^* : \partial K \rightarrow S^{n-1}$ precisely coincides with the Gauss map $\n_{\partial K}$. 
It easily follows that $|x^*| = 1/ h_K(\theta^*)$ and thus $x^* \in \partial K^{\circ}$ and  $\scalar{x,x^*} = 1$ for $\H^{n-1}$-almost-every $x \in \partial K$. We naturally extend $x^*$ and $\theta^*$ as $0$-homogeneous vector-fields, defined $\H^n$-a.e. on the entire $\R^n$. Note that $V_K$ is then exactly the push-forward of $\H^n|_{K}$ via $\theta^*$. 

We will typically consider $x^*$ as a function of $\theta^*$:
\[
x^* : S^{n-1} \ni \theta^* \mapsto \frac{\theta^*}{h_K(\theta^*)} . 
\]
The associated ``$K$-adapted linear" functional on $S^{n-1}$ in the direction of $\xi \in \R^n$ is defined as:
\[
\lin_{K,\xi}(\theta^*) := \scalar{x^*,\xi} = \frac{\scalar{\theta^*,\xi}}{h_K(\theta^*)} . 
\]
It is immediate to verify that:
\[
-\Delta_K \lin_{K,\xi} = (n-1) \lin_{K,\xi} \;\;\; \forall \xi \in \R^n . 
\]
In his proof that $\lambda_1(-\Delta_K) = n-1$, it was in fact shown by Hilbert (see \cite[p. 110]{BonnesenFenchelBook}) that the corresponding eigenspace is precisely $n$-dimensional, i.e. that there are no other eigenfunctions for the eigenvalue $n-1$ besides $\{ \lin_{K,\xi} \}_{\xi \in \R^n}$.

Since in this work we will not use the dual representation $\theta$ to that given by $\theta^*$ (namely $\theta = \frac{x}{|x|}$), we will often write $\theta$ instead of $\theta^*$ in our various calculations below.

\subsection{$\Gamma_{-p}$-bodies and $S_2$-isotropic position}

In \cite{LYZ-LpJohnEllipsoids}, Lutwak--Yang--Zhang introduced the convex bodies $\Gamma_{-p} K$ for $p \geq 1$, obtained via the $L^p$ spherical cosine transform of the $L^p$ surface-area measure $S_p K$:
\begin{equation} \label{eq:Gamma-p}
\norm{x}^p_{\Gamma_{-p} K} := \frac{1}{V(K)} \int_{S^{n-1}} \abs{\scalar{x,\theta}}^p dS_p(K,\theta) . 
\end{equation}
Up to normalization, they coincide with the polar $L^p$ projection bodies $\Pi^*_p K$ introduced in \cite{LYZ-LpAffineIsoperimetricInqs}. It is interesting to note that their gauge function also coincides (up to normalization) with the $L^p(V_K)$-norm of our $K$-adapted linear functionals:
\begin{equation} \label{eq:Gamma-p-lin}
\norm{x}^p_{\Gamma_{-p} K} = \frac{n}{V(K)} \int_{S^{n-1}} |\lin_{K,x}|^p dV_K . 
\end{equation}

When $p=2$, note that $\Gamma_{-2} K$ defines an ellipsoid, first introduced and studied in \cite{LYZ-NewEllipsoid}. In \cite{LYZ-LpJohnEllipsoids}, it was realized that $E_2 K = \Gamma_{-2} K$ is a member of an entire family of ellipsoids $E_p K$ (indexed by $p \in (0,\infty]$), called the $L^p$ John ellipsoids, which are $GL_n$-covariantly associated to $K$ (with the classical John ellipsoid corresponding to $p=\infty$). These ellipsoids are characterized by the property that the total $L^p$ surface-area $\norm{S_p K}$ is minimized among all $SL_n$ images of $K$ if and only if $E_p K$ is a Euclidean ball. 

Observe from (\ref{eq:Gamma-p}) that $E_2 K = \Gamma_{-2} K$ is a Euclidean ball if and only if the measure $S_2 K$ is isotropic.  
Recall that a measure $\mu$ on $S^{n-1}$ is called isotropic if:
\[
\int_{S^{n-1}} \scalar{x,u}^2 d\mu(x) = \frac{1}{n} |u|^2 \norm{\mu} \;\;\; \forall u \in \R^n .
\]
Note that the left-hand-side defines a quadratic form in $u$ on $\R^n$ whose trace is always $\int_{S^{n-1}} |x|^2 d\mu(x) = \norm{\mu}$, so the isotropicity condition amounts to demanding that this quadratic form is the appropriate multiple of the identity.

 It follows e.g. from \cite[Lemma $1^*$]{LYZ-NewEllipsoid} or \cite[Lemma 4.1]{LYZ-LpJohnEllipsoids}  that for any convex body $K \in \K$ there exists $T \in SL_n$ so that $S_2 T(K)$ is isotropic, and that this $T$ is unique up to composition with rotations from the left. We will say that $T(K)$ is the ``$S_2$-isotropic position" of $K$ (the definite article is well justified since we typically identify convex bodies modulo rotations).

\section{Calculations} \label{sec:calc}

\subsection{Smooth case}

Let $K \in \K^2_{+}$. Recall that $\Delta_K$ is the weighted Laplacian on $(S^{n-1},g_K,V_K)$, and therefore satisfies by the chain-rule for any  $z \in C^2(S^{n-1})$ and $\varphi \in C^2(\R)$:
\begin{equation} \label{eq:chain-rule}
\Delta_K (\varphi(z)) = \varphi'(z) \Delta_K z + \varphi''(z) |\nabla z|^2_{g_K} . 
\end{equation}

We are now ready to establish the following new observation. Note that it is very rare to precisely relate between the Dirichlet energy and the $L^2$-norm of a test-function -- a property typically reserved for eigenfunctions of the associated Laplacian. 

\begin{proposition} \label{prop:energy-lin}
Let $K \in \K^2_{+}$ and $\xi \in \R^n$. For any natural $p \in \N$:
\[
\int_{S^{n-1}} |\nabla (\lin^p_{K,\xi})|_{g_K}^2 dV_K = (n-1) \frac{p^2}{2p-1} \int_{S^{n-1}} \lin^{2p}_{K,\xi} dV_K . 
\]
\end{proposition}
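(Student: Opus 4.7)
The plan is to reduce the identity to the eigenvalue relation $-\Delta_K \lin_{K,\xi} = (n-1)\lin_{K,\xi}$ via integration by parts together with the chain-rule (\ref{eq:chain-rule}). Set, for brevity, $\ell = \lin_{K,\xi}$, and let
\[
I_p := \int_{S^{n-1}} |\nabla \ell^p|_{g_K}^2 \, dV_K, \qquad J_p := \int_{S^{n-1}} \ell^{2p} \, dV_K.
\]
First, I would apply the integration-by-parts formula to rewrite $I_p = \int (-\Delta_K \ell^p) \,\ell^p \, dV_K$. Next, applying the chain rule (\ref{eq:chain-rule}) to $\varphi(z) = z^p$, together with $\Delta_K \ell = -(n-1)\ell$, yields
\[
\Delta_K(\ell^p) = -(n-1)\, p\, \ell^p + p(p-1)\, \ell^{p-2}|\nabla \ell|_{g_K}^2.
\]
(For $p=1$ the coefficient $p(p-1)$ vanishes, absorbing the formally singular factor $\ell^{p-2}$; for $p\ge 2$ all quantities are smooth on $S^{n-1}$.)

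The key observation is that the chain rule also gives $|\nabla \ell^p|_{g_K}^2 = p^2 \,\ell^{2p-2}|\nabla \ell|_{g_K}^2$, so the ``unwanted'' term on the right is nothing but $I_p/p^2$ in disguise:
\[
\int_{S^{n-1}} \ell^{2p-2}|\nabla \ell|_{g_K}^2 \, dV_K \;=\; \tfrac{1}{p^2}\, I_p.
\]
Multiplying the chain-rule identity for $\Delta_K(\ell^p)$ by $\ell^p$ and integrating against $V_K$, I obtain
\[
I_p \;=\; (n-1)\, p\, J_p \;-\; p(p-1)\cdot \tfrac{1}{p^2}\, I_p \;=\; (n-1)\, p\, J_p \;-\; \tfrac{p-1}{p}\, I_p.
\]
Finally, solving the resulting linear equation,
\[
\tfrac{2p-1}{p}\, I_p \;=\; (n-1)\, p\, J_p \quad\Longrightarrow\quad I_p \;=\; (n-1)\,\tfrac{p^2}{2p-1}\, J_p,
\]
which is the desired identity.

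There is no real obstacle: the proof is a short self-referential computation that closes on itself thanks to the special fact that $\ell$ is an eigenfunction of $-\Delta_K$ with eigenvalue $n-1$. The only mild subtlety is the formal appearance of $\ell^{p-2}$, which is harmless because either $p\ge 2$ (so $\ell^{p-2}$ is smooth) or $p=1$ (where the offending term has coefficient zero and the statement becomes the defining eigenvalue relation).
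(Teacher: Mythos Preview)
Your proposal is correct and follows essentially the same route as the paper: integrate by parts, apply the chain rule (\ref{eq:chain-rule}) with $\varphi(t)=t^p$ together with $-\Delta_K\ell=(n-1)\ell$, recognize $\int \ell^{2p-2}|\nabla\ell|^2_{g_K}\,dV_K = I_p/p^2$, and solve the resulting linear equation for $I_p$. The paper's presentation briefly abstracts the computation to general $\varphi$ before specializing, but the content is identical.
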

\begin{proof}
Abbreviate $\lin = \lin_{K,\xi}$. Recall that $-\Delta_K(\lin) = (n-1) \lin$. 
Given $\varphi \in C^2(\R)$, denote $\varphi_1(t) := \varphi(t) \varphi'(t) t$ and $\varphi_2(t) := \varphi(t) \varphi''(t)$. 
It follows from (\ref{eq:chain-rule}) that:
\[
-\varphi(\lin) \Delta_K(\varphi(\lin)) = (n-1) \varphi_1(\lin)  - \varphi_2(\lin) |\nabla \lin|^2_{g_K} .
\]
On the other hand:
\[
|\nabla \varphi(\lin)|_{g_K}^2 =  (\varphi')^2(\lin)  |\nabla \lin|^2_{g_K} .
\]
But since:
\[
\int_{S^{n-1}} |\nabla \varphi(\lin)|_{g_K}^2 dV_K = - \int_{S^{n-1}} \varphi(\lin) \Delta_K(\varphi(\lin)) dV_K ,
\]
it follows that whenever $(\varphi')^2$ and $\varphi_2$ are proportional, we can express the latter integral as a multiple of an integral over $\varphi_1(\lin)$ only. In particular, when $\varphi(t) = t^p$, we obtain:
\[
p^2 \int_{S^{n-1}} \lin^{2p-2} |\nabla \lin|^2_{g_K}  dV_K = (n-1) p \int_{S^{n-1}} \lin^{2p} dV_K - p (p-1) \int_{S^{n-1}} \lin^{2p-2}  |\nabla \lin|^2_{g_K} dV_K ,
\]
and hence:
\[
\int_{S^{n-1}} |\nabla (\lin^p)|_{g_K}^2 dV_K  = p^2 \int_{S^{n-1}} \lin^{2p-2} |\nabla \lin|^2_{g_K}  dV_K = (n-1) p \frac{p^2}{p^2 + p (p-1)}  \int_{S^{n-1}} \lin^{2p} dV_K ,
\]
as asserted. 
\end{proof}

\subsection{General case}

Our plan will be to use $\lin_{K,\xi}^2$ as a test-function in the Rayleigh-Ritz quotient (\ref{eq:RR}) for upper-bounding $\lambda_{1,e}(-\Delta_K)$ when $K \in \K^2_{+,e}$. To handle general convex bodies $K \in \K_e$, we will also require to represent $\lin_{K,\xi}^2$ as the difference of a support function and a multiple of $h_K$ -- this is handled in the present subsection.

\begin{lemma} \label{lem:diff}
Let $K \in \K$ with $\frac{1}{R} B_2^n \subset K$ ($R > 0$). Then for any even $p \in 2 \N$ and $\xi \in S^{n-1}$, 
\[
h_{K_{R,p,\xi}} := h_K( (p-1) R^p + \lin^p_{K,\xi}) = (p-1) R^p h_K + \frac{\scalar{\cdot,\xi}^p}{h_K^{p-1}} 
\]
is the support function of a convex body $K_{R,p,\xi} \in \K$. 
\end{lemma}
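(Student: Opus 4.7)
The claim asserts that a certain explicitly-given positive $1$-homogeneous function is convex, which is precisely the content of being a support function. First I would note that the two formulas in the statement coincide: since $\lin_{K,\xi}^p = \scalar{\cdot,\xi}^p/h_K^p$ on $S^{n-1}$, multiplication by $h_K$ yields $h_K \lin_{K,\xi}^p = \scalar{\cdot,\xi}^p/h_K^{p-1}$. Denote by $f$ the natural $1$-homogeneous extension to $\R^n$,
\[
f(x) := (p-1)R^p h_K(x) + \frac{\scalar{x,\xi}^p}{h_K(x)^{p-1}}.
\]
Both summands are $1$-homogeneous on $\R^n \setminus \{0\}$; the first is positive there, and the second is non-negative since $p$ is even. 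Hence $f > 0$ on $\R^n \setminus \{0\}$ and is $1$-homogeneous. What remains is convexity, which is the main obstacle.

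I would first reduce to $K \in \K^2_+$: since $\K^2_+$ is dense in $\K$ in $C$, one can arrange $K_m \to K$ with $K_m \in \K^2_+$ and $\tfrac{1}{R_m} B_2^n \subseteq K_m$ for some $R_m \to R$; pointwise convergence of the associated $f_m$ to $f$ then transfers convexity. For $K \in \K^2_+$, $h := h_K$ is $C^2$ on $\R^n \setminus \{0\}$, and it suffices to show the Euclidean Hessian $f_{ij}$ is positive semi-definite there. Writing $u(x) := \scalar{x,\xi}$, a direct computation — in which the mixed terms involving $\xi_i$ and $h_i$ organize as a perfect square — yields
\[
f_{ij} = (p-1)\bigl[R^p - (u/h)^p\bigr] h_{ij} + \frac{p(p-1)\, u^{p-2}}{h^{p+1}}(h\xi_i - u h_i)(h\xi_j - u h_j).
\]

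It remains to check that each of the two summands on the right is positive semi-definite. The second one is a non-negative multiple of a rank-one positive semi-definite matrix, using that $p-2$ is even (so $u^{p-2}\ge 0$) and $p(p-1)>0$. For the first, the hypothesis $\tfrac{1}{R}B_2^n \subseteq K$ is equivalent to $h_K(x) \ge |x|/R$, which combined with Cauchy--Schwarz ($|\xi|=1$) gives $|u(x)|\le |x| \le R\,h(x)$, i.e.\ $|u/h|\le R$ on $\R^n \setminus\{0\}$. Since $p$ is even, $(u/h)^p \le R^p$, so the scalar coefficient $(p-1)[R^p-(u/h)^p]$ is non-negative, and $h_{ij}\ge 0$ by convexity of $h_K$. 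Hence $f_{ij}\ge 0$, so $f$ is convex on $\R^n$, and therefore the support function of some $K_{R,p,\xi}\in \K$.
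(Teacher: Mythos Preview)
Your proof is correct, and takes a genuinely different route from the paper's. The paper works at the first-order level: it uses the characterization that a $1$-homogeneous $C^1$ function $f$ is convex iff $\scalar{\nabla f(y),z} \le f(z)$ for all $y,z$, computes the gradient of $f$, and after setting $A_x := \scalar{\xi,x}/h_K(x)$ reduces the desired inequality to the arithmetic--geometric mean inequality $\tfrac{p-1}{p}A_y^p + \tfrac{1}{p}A_z^p \ge A_y^{p-1}A_z$ (which uses that $p$ is even). You instead work at the second-order level, computing the Euclidean Hessian and observing that it splits as a non-negative scalar multiple of the convex Hessian $h_{ij}$ plus a non-negative rank-one term; the coefficient $R^p - (u/h)^p \ge 0$ is where the in-radius hypothesis enters, and the rank-one structure is where parity of $p$ is used (via $u^{p-2}\ge 0$). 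Your decomposition is clean and arguably more transparent once the Hessian has been computed; the paper's approach has the minor advantage of needing only $C^1$ smoothness before the density argument, and it makes the underlying AM--GM inequality explicit. Both arguments conclude by the same approximation step from smooth $K$ to general $K \in \K$.
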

\begin{proof}
The function $h_{K_{R,p,\xi}}$ is clearly $1$-homogeneous on $\R^n$, as well as continuous and positive outside the origin, so it remains to establish that it is convex. 

Assume first that $h_K$ is $C^1$ smooth. Note that the convexity of a $1$-homogeneous function $f \in C^1_{>0}(\R^n \setminus \{0\})$ is equivalent to the property that:
\begin{equation} \label{eq:convex-equiv}
\scalar{\nabla f(y),z} \leq f(z) \;\;\; \forall y,z \in \R^n 
\end{equation}
(defining $\nabla f(0) := 0$). 
Indeed, if $f$ is convex then by homogeneity $f(y + \eps z) \leq f(y) + \eps f(z)$ and (\ref{eq:convex-equiv}) holds. Conversely, if (\ref{eq:convex-equiv}) holds, subtracting Euler's identity $\scalar{\nabla f(y),y} = f(y)$, it follows that:
\[
f(y) + \scalar{\nabla f(y), z-y} \leq f(z) \;\;\; \forall y,z \in \R^n ,
\]
which is precisely the property that the graph of $f$ lies above every tangent plane, so $f$ is convex. 

We therefore verify (\ref{eq:convex-equiv}) for $h_{K_{R,p,\xi}}$. Given  $y,z \in \R^n$, our goal is to show that:
\[
 (p-1) R^p \scalar{\nabla h_K(y),z} + p \frac{\scalar{\xi,y}^{p-1}}{h^{p-1}_K(y)} \scalar{\xi,z} - (p-1) \frac{\scalar{\xi,y}^{p}}{h^{p}_K(y)} \scalar{\nabla h_K(y),z} \leq (p-1) R^p h_K(z) + \frac{\scalar{\xi,z}^p}{h^{p-1}_K(z)} . 
\]
Set $A_x := \frac{\scalar{\xi,x}}{h_K(x)}$, and note that since $\xi \in S^{n-1}$ and $K \supset \frac{1}{R} B_2^n$ we have $|A_x| \leq R$ for all $x$. Rewriting the last inequality, we would like to show that:
\[
(p-1) (R^p -  A_y^p) \scalar{\nabla h_K(y),z} \leq (p-1) R^p h_K(z) - p A_y^{p-1} \scalar{\xi,z} + h_K(z) A_z^p . 
\]
Using that $R^p - A_y^p \geq 0$ and that $\scalar{\nabla h_K(y),z} \leq h_K(z)$ by (\ref{eq:convex-equiv}), it is enough to establish that:
\[
0 \leq (p-1) A_y^p h_K(z) - p A_y^{p-1} \scalar{\xi,z} + h_K(z) A_z^p .
\]
Dividing by $p h_K(z)$, this is equivalent to:
\[
\frac{p-1}{p} A_y^p + \frac{1}{p} A_z^p - A_y^{p-1} A_z \geq 0 . 
\]
But the latter is a consequence of the arithmetic-geometric means inequality (recall that $p$ is even), and so convexity of $h_{K_{R,p,\xi}}$ is established. 

The claim has been established for $K$'s with $C^1$-smooth support-function. For general $K \in \K$, simply approximate $K$ in the $C$-topology using the former class; since the inradius assumption $\frac{1}{R} B_2^n \subset K$ is continuous in the latter topology and since convexity is preserved under pointwise convergence, the assertion follows for general $K$. 
\end{proof}

The following may be of independent interest. Recall our abbreviation $V_K(f;2) = V_K(f,f)$ from (\ref{eq:semicolon}). 

\begin{proposition} \label{prop:V2-lin}
For any $K \in \K$, even $p \in 2 \N$ and $\xi \in \R^n$, we have:
\[
V_K\brac{\lin_{K,\xi}^p ; 2} = - \frac{(p-1)^2}{2 p - 1} \int_{S^{n-1}} \lin_{K,\xi}^{2p} dV_K . 
\]
\end{proposition}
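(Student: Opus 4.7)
The plan is to first establish the identity in the smooth case $K \in \K^2_+$ by combining the mixed-volume / weighted-Laplacian duality (\ref{eq:Delta-V2}) with Proposition \ref{prop:energy-lin}, and then to pass to general $K \in \K$ by Hausdorff approximation, using Lemma \ref{lem:diff} to keep the left-hand side meaningful.

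For $K \in \K^2_+$, abbreviate $\lin := \lin_{K,\xi}$; since $h_K \in C^2_{>0}(S^{n-1})$, also $\lin^p \in C^2(S^{n-1})$. Applying (\ref{eq:Delta-V2}) with $z = w = \lin^p$ gives
\[
V_K(\lin^p;2) \;=\; \int_{S^{n-1}} \lin^{2p}\, dV_K \;+\; \frac{1}{n-1}\int_{S^{n-1}} (\Delta_K \lin^p)\, \lin^p\, dV_K .
\]
The integration-by-parts formula recorded just after (\ref{eq:Delta-V2}) identifies the second integral with $-\int_{S^{n-1}} |\nabla \lin^p|_{g_K}^2\, dV_K$, and Proposition \ref{prop:energy-lin} evaluates this as $-(n-1)\tfrac{p^2}{2p-1}\int_{S^{n-1}} \lin^{2p}\, dV_K$. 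Substituting back and simplifying with the elementary identity $2p - 1 - p^2 = -(p-1)^2$ produces the asserted formula.

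For general $K \in \K$, fix $R > 0$ with $\tfrac{1}{R} B_2^n \subset K$ and approximate $K$ in the Hausdorff metric by a sequence $K_i \in \K^2_+$ with $\tfrac{1}{R} B_2^n \subset K_i$ for all large $i$ (using density of $\K^2_+$ in $\K$). Since $h_{K_i} \to h_K$ uniformly on $S^{n-1}$ with $h_K \geq 1/R$, the functions $\lin_{K_i,\xi}^{2p}$ converge uniformly to $\lin_{K,\xi}^{2p}$, and the weak continuity of cone-volume measures ($V_{K_i} \to V_K$) then yields convergence of the right-hand side. For the left-hand side, Lemma \ref{lem:diff} applied with the fixed $R$ and even $p$ writes
\[
\lin_{K_i,\xi}^p h_{K_i} \;=\; h_{K_{i,R,p,\xi}} - (p-1) R^p h_{K_i},
\]
and similarly for $K$; uniform convergence of support functions transfers to $K_{i,R,p,\xi} \to K_{R,p,\xi}$ in the Hausdorff metric. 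Expanding $V_{K_i}(\lin_{K_i,\xi}^p;2)$ multilinearly and invoking Hausdorff continuity of mixed volumes (\cite[\S 5.1]{Schneider-Book-2ndEd}) delivers the required convergence of the left-hand side, completing the passage to the limit.

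The substance is entirely in the smooth case, which is a one-line calculation once Proposition \ref{prop:energy-lin} is available; the approximation step is essentially bookkeeping. The one mildly subtle point is that $V_K(\lin_{K,\xi}^p;2)$ for general $K \in \K$ is only meaningful once $\lin_{K,\xi}^p h_K$ is exhibited as a difference of support functions, which is precisely the content of Lemma \ref{lem:diff}; beyond this I anticipate no real obstacle.
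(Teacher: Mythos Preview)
Your proposal is correct and follows essentially the same route as the paper: the smooth case via (\ref{eq:Delta-V2}), integration by parts, and Proposition \ref{prop:energy-lin}, followed by Hausdorff approximation using Lemma \ref{lem:diff} to interpret the mixed volume in the limit. The only cosmetic omission is that Lemma \ref{lem:diff} is stated for $\xi \in S^{n-1}$, so one should normalize $\xi$ by homogeneity before invoking it, as the paper does.
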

\begin{rem}
By the previous lemma $ h_K \lin_{K,\xi}^p = \scalar{\cdot,\xi}^p/h_K^{p-1}$ is the difference of two support functions, and so the mixed volume on the left is well-defined by multi-linearity. 
\end{rem}
\begin{proof}[Proof of Proposition \ref{prop:V2-lin}] 
The assertion for $K \in \K^2_+$ follows by  (\ref{eq:Delta-V2}) and Proposition \ref{prop:energy-lin}, since (abbreviating $\lin = \lin_{K,\xi}$):
\begin{align*}
& V_K(\lin^p ; 2) - \int_{S^{n-1}} \lin^{2p} dV_K  = \frac{1}{n-1} \int_{S^{n-1}} \lin^p \Delta_K (\lin^p) dV_K \\
& = -\frac{1}{n-1} \int_{S^{n-1}} |\nabla (\lin^p)|_{g_K}^2 dV_K = -\frac{p^2}{2p-1} \int_{S^{n-1}} \lin^{2p} dV_K . 
\end{align*}

For general $K \in \K$, approximate it in the $C$-topology using $K_i \in \K^2_+$. By homogeneity we may assume that $\xi \in S^{n-1}$, and let $R > 0$ be so that $\frac{1}{R} B_2^n \subset K_i$ for all $i$ (in particular, all $h_{K_i} \geq R > 0$ are uniformly bounded away from zero on $S^{n-1}$).  Recall by Lemma \ref{lem:diff} that $h_{K_i} \lin^p_{K_i,\xi} = h_{K_i,R,\xi} - (p-1) R^p h_{K_i}$ is the difference of support functions. Letting $i \rightarrow \infty$, we see that $h_{K_i}$, $\lin_{K_i,\xi}$ and hence $h_{K_i,R,\xi}$ converge in $C$-norm to $h_K$, $\lin_{K,\xi}$ and $h_{K,R,\xi}$, respectively. By continuity of mixed volumes in the $C$-topology, it follows that $V_{K_i}(\lin_{K_i,\xi}^p ; 2)$ converges to $V_K(\lin_{K,\xi}^p ; 2)$. Finally, as $V_{K_i}$ converges weakly to $V_K$, it follows that $\int_{S^{n-1}} \lin_{K_i,\xi}^{2p} dV_{K_i} \rightarrow \int_{S^{n-1}} \lin_{K,\xi}^{2p} dV_{K}$, and so the asserted identity is preserved in the limit.  
\end{proof}

\section{Finding a good direction} \label{sec:directions}

In order to use $\lin_{K,\xi}^2$ as an informative test-function, we will also need to find a good direction $\xi \in S^{n-1}$. This is handled in Theorem \ref{thm:directions} below, which extends Theorem \ref{thm:main-direction} from the Introduction. 

\begin{thm} \label{thm:directions}
Let $K \in \K$ be a convex body (having the origin in its interior). 
\begin{enumerate}
\item
There exists $\xi \in S^{n-1}$ so that:
\begin{equation} \label{eq:direction1}
\int_{S^{n-1}} \lin_{K,\xi}^4 dV_K \geq \frac{3n}{n+2} \frac{(\int_{S^{n-1}} \lin_{K,\xi}^2 dV_K)^2}{V(K)} . 
\end{equation}
\item
Assume moreover that $K$ is in $S_2$-isotropic position. \\
Then (\ref{eq:direction1}) holds in expectation over $\xi$ which is uniformly distributed in $S^{n-1}$:
\begin{equation} \label{eq:explicit-expectation}
\int_{\S^{n-1}} \int_{S^{n-1}} \lin_{K,\xi}^4 dV_K d\xi \geq \frac{3n}{n+2} \frac{\int_{S^{n-1}} (\int_{S^{n-1}} \lin_{K,\xi}^2 dV_K)^2 d\xi}{V(K)} . 
\end{equation}
Equality in (\ref{eq:explicit-expectation}) holds if and only if $K$ is a tangential body of a Euclidean ball. 
In particular, if $K \in \K^2_+$, or more generally, if the support of $S_K$ is the entire $S^{n-1}$, equality in (\ref{eq:explicit-expectation}) holds if and only if $K$ is a Euclidean ball. 
\item \label{it:assertion3}
For a general $K \in \K$, equality in  (\ref{eq:direction1}) holds for every $\xi \in S^{n-1}$, or equivalently,
\begin{equation} \label{eq:Gamma-multiple}
\Gamma_{-4} K = \brac{\frac{n+2}{3}}^{\frac{1}{4}} \Gamma_{-2} K ,
\end{equation}
 if and only if, up to a linear transformation, $K$ is a tangential body to a Euclidean ball and in addition $\Gamma_{-2} K$ and $\Gamma_{-4} K$ are Euclidean balls themselves. 
\end{enumerate}
\end{thm}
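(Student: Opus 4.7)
The plan is to prove (2) first via a direct Fubini-plus-Cauchy--Schwarz computation in the $S_2$-isotropic position, then deduce (1) by the $GL_n$-invariance of the inequality, and finally extract (3) from the equality analysis of (2). For (2), assuming $K$ is in $S_2$-isotropic position, the defining isotropicity of $S_2 K$ combined with $dV_K = h_K dS_K/n$ immediately gives
\[
\int_{S^{n-1}} \lin_{K,\xi}^2 dV_K = \frac{1}{n}\int_{S^{n-1}} \scalar{\theta,\xi}^2 dS_2(K,\theta) = \frac{\norm{S_2 K}}{n^2}
\]
for every $\xi \in S^{n-1}$ -- this $\xi$-independence is the whole point of the chosen position. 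For the fourth moment, Fubini and the standard spherical identity $\int_{S^{n-1}} \scalar{\theta,\xi}^4 d\xi = 3/(n(n+2))$ (for fixed $\theta \in S^{n-1}$ under Haar probability on $\xi$) give
\[
\int_{S^{n-1}} \int_{S^{n-1}} \lin_{K,\xi}^4 dV_K d\xi = \frac{3}{n^2(n+2)} \int_{S^{n-1}} h_K^{-3} dS_K.
\]
Substituting both into (\ref{eq:explicit-expectation}) and cancelling common factors reduces the claim to
\[
\int_{S^{n-1}} h_K^{-3} dS_K \geq \frac{\norm{S_2 K}^2}{n V(K)},
\]
which I would prove by Cauchy--Schwarz on the factorization $\norm{S_2 K} = \int h_K^{-3/2} \cdot h_K^{1/2} dS_K$, together with $\int h_K dS_K = n V(K)$. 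The equality case forces $h_K$ to be $S_K$-a.e. constant, which by the classical characterization recalled in the Introduction is precisely the tangential body condition; if additionally $\supp S_K = S^{n-1}$ this forces $K$ to be a Euclidean ball.

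For (1), I would first recast (\ref{eq:direction1}) via (\ref{eq:Gamma-p-lin}) in its $GL_n$-invariant form $\norm{\xi}^4_{\Gamma_{-4} K} \geq \frac{3}{n+2}\norm{\xi}^4_{\Gamma_{-2} K}$. Using the $GL_n$-covariance $\Gamma_{-p}(TK) = T \Gamma_{-p} K$ (which follows directly from (\ref{eq:Gamma-p})) and $4$-homogeneity in $\xi$, existence of $\xi \in S^{n-1}$ satisfying the inequality for $K$ is equivalent to existence for any $TK$. Choosing $T$ so that $TK$ is in $S_2$-isotropic position reduces (1) to an immediate pigeonhole consequence of (2): the average over $\xi \in S^{n-1}$ of $\norm{\xi}^4_{\Gamma_{-4} TK}$ is at least the constant value of $\frac{3}{n+2}\norm{\xi}^4_{\Gamma_{-2} TK}$, so some $\xi$ achieves the pointwise inequality.

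For (3), equality in (\ref{eq:direction1}) for every $\xi \in S^{n-1}$ is literally (\ref{eq:Gamma-multiple}), i.e.\ $\Gamma_{-4} K$ is a scalar multiple of the ellipsoid $\Gamma_{-2} K$. Applying the $T \in GL_n$ that maps $\Gamma_{-2} K$ to a Euclidean ball makes $\Gamma_{-2} TK$ and $\Gamma_{-4} TK$ both balls, so the second and fourth moments of $\lin_{TK,\xi}$ are each constant in $\xi \in S^{n-1}$; the pointwise equality therefore reduces to equality in the averaged form (\ref{eq:explicit-expectation}), which by (2) forces $TK$ to be a tangential body to a Euclidean ball. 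The converse follows by reading this computation forward in the same position. I expect the main obstacle to be the bookkeeping of normalization constants -- tracking factors of $n$ arising from $dV_K = h_K dS_K/n$, from isotropicity, and from the spherical moment $3/(n(n+2))$ -- and confirming they collapse to the sharp ratio $3n/(n+2)$; a subtler conceptual point in (3) is that in the isotropic position the tangential body condition alone produces equality only for the \emph{average} in (\ref{eq:explicit-expectation}), whereas pointwise equality additionally requires $\Gamma_{-4} K$ to be a ball in the same linear position, which is precisely the content of the homothety (\ref{eq:Gamma-multiple}).
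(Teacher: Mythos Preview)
Your proposal is correct and follows essentially the same route as the paper: reduce (1) to (2) by centro-affine invariance, prove (2) by averaging in $\xi$ in the $S_2$-isotropic position, and read off (3) from the equality analysis. The one genuine streamlining is in your proof of (2): by observing immediately that $\int \lin_{K,\xi}^2 dV_K$ is \emph{constant} in $\xi$ in the $S_2$-isotropic position, you avoid having to compute the cross-moment $\E\scalar{\xi,u}^2\scalar{\xi,v}^2 = 2\scalar{u,v}^2 + |u|^2|v|^2$ and then cancel the $\scalar{\theta_1^*,\theta_2^*}^2$ term against isotropicity, as the paper does. Both arguments land on the same core inequality --- your Cauchy--Schwarz estimate $\int h_K^{-3}\,dS_K \ge \norm{S_2 K}^2/(nV(K))$ is exactly the paper's $\Var_{V_K}(1/h_K^2)\ge 0$ after unwinding $dV_K = \frac{1}{n}h_K\,dS_K$ --- so the difference is purely in presentation, though yours is the slightly shorter path. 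Your treatment of (3), including the caveat that the tangential-body condition alone gives only averaged equality and that pointwise equality additionally forces $\Gamma_{-4}K$ to be a ball, matches the paper's analysis.
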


\begin{rem}
Recall from the Introduction that a tangential body to a Euclidean ball is equivalently characterized as having $h_K$ constant $S_K$-a.e. In that case, all the $L^p$ surface-area measures $S_p K$ are proportional, and so all $S_p$-isotropic positions coincide. The following characterization was obtained by Zou--Xiong in \cite{ZouXiong-IdenticalJohnAndLYZEllipsoids} for $p \in (0,\infty)$, and extended to $p=0$ (assuming the centroid of $K$ is at the origin) by Hu--Xiong \cite{HuXiong-LogJohnEllipsoid}, who showed that the following statements are equivalent:
\begin{itemize}
\item $K \in \K$ is a tangential body of a Euclidean ball in its $S_2$-isotropic (equivalently, $S_p$-isotropic) position.
\item $E_2 K$ (and in fact, any $E_p K$) coincides with the John ellipsoid $E_\infty K$. 
\item $E_2 K$ (and in fact, any $E_p K$) is a subset of $K$. 
\end{itemize}
\end{rem}

\begin{rem}\label{rem:L4-cosine}
The examples of a regular cube or octahedron show that there are bodies $K$ other than Euclidean balls which are tangential to a Euclidean ball and for which $S_2 K$ is isotropic; more generally, any tangential body to a ball which has enough symmetries will necessarily be in $S_2$-isotropic position. However, we do not know whether the property stated in assertion (\ref{it:assertion3}) above characterizes ellipsoids in the class of origin-symmetric convex bodies $\K_e$. This has to do with the fact that the $L^p$ spherical cosine-transform, while being injective on the space of even measures for $p > 0$ which is not an even integer \cite{Neyman-LpCosineInjectivity}, is not injective for $p \in 2 \N$ (since in that case $\{\abs{\scalar{\cdot,\xi}}^p\}$ span a finite dimensional linear space). Consequently, it is easy to construct examples of bodies $K \in \K_e$ different from Euclidean balls so that  $\Gamma_{-2} K$ and $\Gamma_{-4} K$ are themselves Euclidean balls (see e.g. \cite[Section 4]{RyaboginZvavitch-LpProjectionBodies}). But perhaps in conjunction with the assumption that $K$ is a tangential body of a ball, this already forces $K$ to be a ball itself? We leave this as an interesting open problem. 
\end{rem}

\begin{proof}[Proof of Theorem \ref{thm:directions}]
\hfill

\noindent \textbf{Reduction of (1) to (2).}
Observe that the first assertion on the existence of $\xi \neq 0$ so that (\ref{eq:direction1}) holds is centro-affine invariant. To see this, rewrite (\ref{eq:direction1}) as:
\[
\int_K \scalar{x^* , \xi}^4 dx \geq \frac{3n}{n+2} \frac{(\int_K \scalar{x^* , \xi}^2 dx)^2}{V(K)} . 
\]
Since $T(x)^* = T^{-*} x^*$ for any $T \in GL_n$, it follows that:
\[
\int_{T(K)} \scalar{y^* , \xi}^p dy = |\det(T)| \int_K \scalar{T(x)^* , \xi}^p dx =  |\det(T)| \int_K \scalar{x^* , T^{-1} \xi}^p dx ,
\]
and the centro-affine invariance immediately follows. Consequently, we may assume that $K$ is in $S_2$-isotropic position, thereby reducing the first assertion to the second. For convenience, let us also assume that $V(K)=1$. 

\medskip
\noindent \textbf{Proof of (2).}
Instead of randomly drawing $\xi \in S^{n-1}$ from the uniform Haar measure, it will be more convenient to draw $\xi \in \R^n$ from the (rotation invariant) standard Gaussian measure (since the assertion is homogeneous in $\xi$). 
 
If $\xi$ is a standard Gaussian random-vector in $\R^n$, it is straightforward to check that for all $u,v \in \R^n$:
\begin{equation} \label{eq:Gaussians}
\E \scalar{\xi,u}^4 = 3 |u|^4 ~,~ \E \scalar{\xi,u}^2 \scalar{\xi,v}^2 = 2 \scalar{u,v}^2 + |u|^2 |v|^2 . 
\end{equation}
Indeed, by rotation invariance of the Gaussian measure, denoting $X \sim Y$ if $X$ and $Y$ are identically distributed, we have:
\[
\scalar{\xi,u}^4 \sim |u|^4 \xi^4_1 ~ , ~  \scalar{\xi,u}^2 \scalar{\xi,v}^2 \sim \xi_1^2 \brac{\xi_1 \scalar{u,v} + \xi_2 \sscalar{u^{\perp},v}}^2 , 
\]
where $u^{\perp}$ is obtained by rotating $u$ by 90 degrees in the linear subspace spanned by $\{u,v\}$.  Noting that $\sscalar{u^{\perp},v}^2 = |u|^2 |v|^2 - \scalar{u,v}^2$, using that $\xi_1,\xi_2$ are independent standard Gaussian variables, and recalling that $\E \xi_i = 0$, $\E \xi_i^2 = 1$ and $\E \xi_i^4 = 3$, (\ref{eq:Gaussians}) immediately follows. 

Rewriting (\ref{eq:direction1}) as:
\[
\int_{S^{n-1}} \frac{\scalar{\theta^*,\xi}^4}{h_K^4(\theta^*)} dV_K(\theta^*) \geq \frac{3n}{n+2} \int_{S^{n-1} \times S^{n-1}}  \frac{\scalar{\theta^*_1,\xi}^2 \scalar{\theta^*_2,\xi}^2  }{h^2_K(\theta^*_1) h_K^2(\theta^*_2)}dV_K(\theta_1^*) dV_K(\theta_2^*)
\]
and taking expectation in $\xi$, our goal is thus to verify that:
\[
3 \int_{S^{n-1}} \frac{dV_K}{h_K^4} \geq \frac{3n}{n+2} \brac{ 2 \int_{S^{n-1} \times S^{n-1}} \scalar{\theta^*_1,\theta^*_2}^2 \frac{dV_K(\theta^*_1)}{h^2_K(\theta^*_1)}\frac{dV_K(\theta^*_2)}{h^2_K(\theta^*_2)} + \brac{\int_{S^{n-1}} \frac{dV_K}{h_K^2}}^2} . 
\]
Recalling that $\frac{dV_K}{h_K^2} = \frac{1}{n} \frac{dS_K}{h_K}= \frac{1}{n} dS_2 K$ and rearranging terms, this amounts to verifying:
\begin{align*}
&\Var_{V_K}(1 / h_K^2) \geq \\
&\frac{2}{n (n+2)} \brac{ \int_{S^{n-1} \times S^{n-1}} \scalar{\theta^*_1,\theta^*_2}^2 dS_2(K,\theta^*_1) dS_2(K,\theta^*_2) - \frac{1}{n} (\int_{S^{n-1}} dS_2 K)^2 } .
\end{align*}
And indeed, our assumption that $S_2 K$ is isotropic precisely ensures that the right-hand-side is $0$, and so we have verified (\ref{eq:direction1}) in expectation since:
\[
\Var_{V_K}(1 / h_K^2) \geq  0 . 
\]

The above analysis shows that equality in expectation in (\ref{eq:direction1}) is equivalent to:
\[
\Var_{V_K}(1 / h_K^2) = 0 ,
\]
meaning that $h_K$ is $V_K$ (and thus $S_K$) almost-everywhere constant, as asserted. 
By continuity of $h_K$, this means that $h_K$ is constant on the entire support of $S_K$, so whenever $S_K$ is of full-support, $h_K$ is constant on $S^{n-1}$ and so $K$ must be a Euclidean ball itself. 

\medskip
\noindent \textbf{Proof of (3).} Recalling (\ref{eq:Gamma-p-lin}), having equality in (\ref{eq:direction1}) for all $\xi \in S^{n-1}$ is clearly equivalent to (\ref{eq:Gamma-multiple}). As before, since (\ref{eq:Gamma-multiple}) is centro-affine invariant, we may apply a linear transformation so that $S_2 K$ is isotropic, or equivalently, so that $\Gamma_{-2} K$ is a Euclidean ball. 

If equality holds in (\ref{eq:direction1}) for all $\xi \in S^{n-1}$, then it also holds in expectation, and so by assertion (2), $K$ is a tangential body of a Euclidean ball. Equality for all $\xi$ also implies by (\ref{eq:Gamma-multiple}) that $\Gamma_{-4} K$ is a multiple of $\Gamma_{-2} K$, and thus a Euclidean ball as well. 

Conversely, if in its $S_2$-isotropic position $K$ is a tangential body of a Euclidean ball, then $h_K$ is $S_K$-a.e. constant and hence all the $S_p K$ measures are isotropic. In particular $V_K$ is isotropic, and by further scaling $K$, we may assume that it is a probability measure. We are also given that $\Gamma_{-4} K$ is a Euclidean ball, and hence $\Gamma_{-4} K = c \Gamma_{-2} K$ for some constant $c > 0$. Writing this as:
\[
n \int_{S^{n-1}} \frac{\scalar{\theta^*,\xi}^4}{h_K^4(\theta^*)} dV_K(\theta^*) = c^{-4}  n^2 \int_{S^{n-1} \times S^{n-1}}  \frac{\scalar{\theta^*_1,\xi}^2 \scalar{\theta^*_2,\xi}^2}{h^2_K(\theta^*_1) h_K^2(\theta^*_2)}  dV_K(\theta_1^*) dV_K(\theta_2^*) ,
\]
canceling the $V_K$-a.e. constant $h_K$,  integrating in $\xi$ as above, and using that $V_K$ is isotropic, we deduce :
\[
3 n = c^{-4} n^2 \brac{\frac{2}{n} + 1} .
\]
It follows that necessarily $c = \brac{\frac{n+2}{3}}^{\frac{1}{4}}$, and hence (\ref{eq:Gamma-multiple}) is established.
\end{proof}

\section{Proofs of Theorems \ref{thm:main1} and \ref{thm:main2}} \label{sec:main}

We are now ready to provide the proofs of Theorems \ref{thm:main1} and \ref{thm:main2}. While Theorem \ref{thm:main1} is a particular case of Theorem \ref{thm:main2}, it might be insightful to first give a proof of the former. 

\begin{proof}[Proof of Theorem \ref{thm:main1}]
Given $K \in \K^2_{+,e}$, there exists by Theorem \ref{thm:directions} a direction $\xi \in S^{n-1}$ so that (\ref{eq:direction1}) holds. 
Note that $\lin^2_{K,\xi} \in C^2_{e}$ is an even function (as $K$ is origin-symmetric). Using $z = \lin^2_{K,\xi}$ as a test function in the Rayleigh-Ritz characterization (\ref{eq:RR}) of $\lambda_{1,e}(-\Delta_K)$, we obtain, after employing Proposition \ref{prop:energy-lin} and (\ref{eq:direction1}):
\begin{align*}
\lambda_{1,e}(-\Delta_K) & \leq \frac{\int_{S^{n-1}} |\nabla (\lin^2_{K,\xi})|^2_{g_K} dV_K}{\int_{S^{n-1}} \lin^4_{K,\xi} dV_K - \frac{(\int_{S^{n-1}} \lin^2_{K,\xi} dV_K)^2}{V(K)}}  \\
& \leq \frac{\frac{4(n-1)}{3} \int_{S^{n-1}} \lin^4_{K,\xi} dV_K  }{\int_{S^{n-1}} \lin^4_{K,\xi} dV_K - \frac{n+2}{3 n} \int_{S^{n-1}} \lin^4_{K,\xi} dV_K  }  = 2 n . 
\end{align*}

If $\lambda_{1,e}(-\Delta_K) = 2n$ then the above argument shows that we cannot have strict inequality in (\ref{eq:direction1}) for any direction $\xi \in S^{n-1}$ (since this would imply that $\lambda_{1,e}(-\Delta_K) < 2n$). 
In other words:
\begin{equation} \label{eq:directions-reversed}
\int_{S^{n-1}} \lin_{K,\xi}^4 dV_K \leq \frac{3n}{n+2} \frac{(\int_{S^{n-1}} \lin_{K,\xi}^2 dV_K)^2}{V(K)} \;\;\; \forall \xi \in S^{n-1}. 
\end{equation}
By centro-affine invariance we may assume that $K$ is in $S_2$-isotropic position. By Theorem \ref{thm:directions} (2), we know that the reverse inequality in (\ref{eq:directions-reversed}) holds in expectation over $\xi$ which is uniformly distributed in $S^{n-1}$. As the expressions in (\ref{eq:directions-reversed}) are continuous in $\xi$, it follows that we must have equality in (\ref{eq:directions-reversed}) for all $\xi$. Since $K \in \K^{2}_+$, assertions (2) or (3) of Theorem \ref{thm:directions} then imply that $K$ is a Euclidean ball in its $S_2$-isotropic position, thereby verifying that $K$ is an ellipsoid. 
\end{proof}

\begin{proof}[Proof of Theorem \ref{thm:main2}]
Let $K \in \K_e$ and let $\frac{1}{R} B_2^n \subset K$ for some $R > 0$. 
By Theorem \ref{thm:directions}, there exists a direction $\xi \in S^{n-1}$ so that (\ref{eq:direction1}) holds. 
Again,  $\lin^2_{K,\xi} \in C_{e}$ is an even function. By Lemma \ref{lem:diff}, $h_{K_{R,\xi}} := h_K(R^2 + \lin^2_{K,\xi})$ is the support function of an origin-symmetric convex body $K_{R,\xi} \in \K_e$. Using $L = K_{R,\xi}$ as a test convex body in the definition (\ref{eq:def-lambdaC}) of $\lambdaC_{1,e}(K)$, we deduce that:
\[
\lambdaC_{1,e}(K) \leq (n-1) \frac{ \int_{S^{n-1}} (R^2 + \lin^2_{K,\xi})^2 dV_K - V_K(R^2+\lin^2_{K,\xi} ; 2) }{ \int_{S^{n-1}} (R^2 + \lin^2_{K,\xi})^2 dV_K - \frac{V_K(R^2+\lin^2_{K,\xi}; 1)^2}{V(K)} } . 
\]
It is immediate to check that both numerator and denominator above do not depend on the value of $R$ -- simply use the multi-linearity of mixed volumes:
\begin{align*}
V_K(R^2 +\lin^2_{K,\xi} ; 2) & = R^4 V(K) + 2 R^2 V_K(\lin^2_{K,\xi}; 1) + V_K(\lin^2_{K,\xi} ; 2) ,\\
V_K(R^2 +\lin^2_{K,\xi} ; 1) & = R^2 V(K) + V_K(\lin^2_{K,\xi} ; 1) ,
\end{align*}
plug this into the above expression, and recall that $V_K(\lin^2_{K,\xi} ; 1) = \int_{S^{n-1}} \lin^2_{K,\xi} dV_K$. Consequently:
\[
\lambdaC_{1,e}(K) \leq (n-1) \frac{ \int_{S^{n-1}} \lin^4_{K,\xi} dV_K - V_K(\lin^2_{K,\xi} ; 2) }{ \int_{S^{n-1}} \lin^4_{K,\xi} dV_K - \frac{(\int_{S^{n-1}} \lin_{K,\xi}^2 dV_K)^2}{V(K)} } ,
\]
which by Proposition \ref{prop:V2-lin} and (\ref{eq:direction1}) translates to:
\begin{align*}
& =  (n-1) \frac{ \frac{4}{3} \int_{S^{n-1}} \lin^4_{K,\xi} dV_K}{ \int_{S^{n-1}} \lin^4_{K,\xi} dV_K - \frac{(\int_{S^{n-1}} \lin_{K,\xi}^2 dV_K)^2}{V(K)} } \\
& \leq \frac{\frac{4(n-1)}{3} \int_{S^{n-1}} \lin^4_{K,\xi} dV_K  }{\int_{S^{n-1}} \lin^4_{K,\xi} dV_K - \frac{n+2}{3 n} \int_{S^{n-1}} \lin^4_{K,\xi} dV_K  }  = 2 n .
\end{align*}
Note that $\int_{S^{n-1}} \lin^4_{K,\xi} dV_K > 0$ since $S_K$ is not concentrated in any hemisphere. 

Since an ellipsoid $\EE$ is in $\K^{2}_{+,e}$, we know that $\lambdaC_{1,e}(\EE) = \lambda_{1,e}(-\Delta_{\EE}) = \lambda_{1,e}(-\Delta_{B_2^n}) = 2n$. Conversely, assume that $\lambdaC_{1,e}(K) = 2n$. As in the proof of Theorem \ref{thm:main1}, the above argument shows that we cannot have strict inequality in (\ref{eq:direction1}) for any direction $\xi \in S^{n-1}$, and so arguing as before, it follows that we must have equality in (\ref{eq:direction1}) for all $\xi$. Theorem \ref{thm:directions} (3) then implies that $K$ is the linear image of a tangential body of a Euclidean ball. 
\end{proof}

\begin{rem} \label{rem:tangential}
Theorem \ref{thm:directions} (3) gives the added information when $\lambdaC_{1,e}(K) = 2n$ that, up to a linear transformation, $K$ is a tangential body of a Euclidean ball with $\Gamma_{-2} K$ and $\Gamma_{-4} K$ being Euclidean balls themselves. As mentioned in Remark \ref{rem:L4-cosine}, we do not know if this is enough to conclude that $K$ is an ellipsoid. However, we can do a bit more and show that in fact (up to a linear transformation) $K$ is an $(n-2)$-tangential body of a Euclidean ball. Since this is not a very significant addition, we only sketch the argument and leave the details to the reader. 

A convex body $K$ is called an $m$-tangential body of a second convex body $L \subset K$ if each $n-m-1$-extreme support plane of $K$ is also a support plane of $L$. Given $u \in S^{n-1}$, the support plane $\{ x \; ; \; \scalar{x,u} = h_K(u) \}$ of $K$ and the normal vector $u$ are called $r$-extreme if there do not exist $r+2$ linearly independent normal vectors $u_1,\ldots,u_{r+2}$ at one and the same boundary point of $K$ such that $u = u_1 + \ldots + u_{r+2}$ \cite[pp. 85-86]{Schneider-Book-2ndEd}. We denote by $\extr_r K$ the collection of $r$-extreme normal vectors $u \in S^{n-1}$ of $K$. Clearly $\extr_0 K \subset \ldots \subset \extr_{n-1} K = S^{n-1}$. By \cite[Theorem 4.5.3]{Schneider-Book-2ndEd}, $\supp(S_K)$, the support of $S_K$, is precisely the closure of $\extr_0 K$, and so a $(n-1)$-tangential body is simply called a tangential body. 
We claim that if  $\lambdaC_{1,e}(K) = 2n$ and $K \in \K_e$ is in $S_2$-isotropic position, then necessarily $\extr_1 K \subset \supp(S_K)$, and hence by (\ref{eq:intro-tangential-body}) $K$ is an $(n-2)$-tangential body of a Euclidean ball. Assume otherwise, and consider an even non-negative smooth function $g$ compactly supported in $S^{n-1} \setminus \supp(S_K)$ so that $g(u) = 1$ for some $u \in \extr_1 K$. The idea is now to repeat the argument above using $L_{\eps} \in \K_e$ with $h_{L_{\eps}} = h_K ((R+1)^2 + \lin_{K,\xi}^2 + \eps g)$ for small enough $\eps > 0$ as a test body in the definition (\ref{eq:def-lambdaC}) of $\lambdaC_{1,e}(K)$. 
Since $g$ is supported outside of $\supp(S_K)$, it will not have any effect on the $V(L_\eps[1],K[n-1])$ or $\int (\frac{h_{L_\eps}}{h_K})^2 dV_K$ terms appearing in (\ref{eq:def-lambdaC}), and will only affect the second mixed volume $V(L_\eps[2],K[n-2])$. By monotonicity of mixed volumes we have $V(L_\eps[2],K[n-2]) \geq V(L_0[2], K[n-2])$ since $\eps > 0$. It remains to show that we have strict inequality $V(L_\eps[2],K[n-2]) > V(L_0[2], K[n-2])$, as this will imply $\lambdaC_{1,e} < 2n$ and yield the desired contradiction. The latter follows from the recent resolution of the equality cases in Minkowski's quadratic inequality by Shenfeld--van Handel \cite[Theorem 2.2]{ShenfeldVanHandel-MinkowskiQuadratic}, since for equality to occur, the supporting planes to $L_0$ and $L_\eps$ must coincide for every $1$-extreme supporting plane of $K$; as $g(u) = 1$ for $u \in \extr_1 K$ this is not the case, and it follows that we must have strict inequality. 
\end{rem}

\begin{rem}
As suggested to us by a referee, it is worth noting that our proof almost does not make use of any particular feature of the Hilbert--Brunn--Minkowski operator $\Delta_K$. Proposition \ref{prop:V2-lin} remains valid for an eigenfunction $\Psi$ of $-\Delta_{g,\mu}$ with eigenvalue $n-1$ for any weighted Laplacian $\Delta_{g,\mu}$ on a weighted Riemannian manifold $(M,g,\mu)$. Whenever the corresponding eigenspace $\{\Psi_\xi\}$ can be shown to satisfy some type of isotropicity condition in $\xi$ which permits to establish an analogue of (\ref{eq:explicit-expectation}), then there exists some $\xi$ so that $\Psi_\xi^2$ has Rayleigh quotient at most $2 n$. In our setting, the isotropicity could be ensured by the centro-affine invariance of $\Delta_K$ and the linearity of the eigenfunction $\Psi_\xi = \lin_{K,\xi}$ in $\xi$. 
\end{rem}

\section{Implications for the even $L^p$-Minkowski problem} \label{sec:LpMinkowski}

In this final section, we fill some missing details from the discussion in Subsection \ref{subsec:intro-LpMinkowski} from the Introduction. 

Recalling the definition  (\ref{eq:intro-F}) of $F_{\mu,p}$, let us introduce when $p \neq 0$:
\[
G_{\mu,p}(K) := \log (p F_{\mu,p}(K)) = \log (\int h_K^p d\mu) - \frac{p}{n} \log V(K) . 
\]
Note that $\frac{1}{p} G_{\mu,p}(K)$, $F_{\mu,p}(K)$ and $F_{c \cdot \mu,p}(K)$ have identical critical points and (local) minima / maxima for any $c > 0$. For completeness, we mention that when $p=0$ these definitions should be interpreted in the limiting sense as in \cite{BLYZ-logMinkowskiProblem}, namely:
\[
F_{\mu,0}(K) :=  \frac{\exp(\int \log h_K d\tilde \mu)}{V(K)^{\frac{1}{n}} } ~,~ G_{\mu,0}(K) := \int \log h_K d\tilde\mu - \frac{1}{n} \log V(K) ,
\]
where $\tilde \nu$ denotes the normalized measure $\nu/ \norm{\nu}$.

\medskip

Given $K \in \K^{2}_{+,e}$ and $z \in C^2_e(S^{n-1})$, we define the $m$-th $C^2_e$-variation ($m=1,2$) at $K$ in the direction of $z$ of a nice-enough functional $F: \K^{2}_{+,e} \rightarrow \R$ as:
\[
\delta^m_K F(z) := \left . \brac{\frac{d}{d\eps}}^m  \right |_{\eps= 0} F(K_\eps) ~,~ h_{K_\eps} = h_K(1+\eps z) . 
\]
Note that since $K \in \K^2_{+,e}$ then for $|\eps|$ small enough, $D^2 h_{K_{\eps}} > 0$, and hence $h_{K_\eps}$ is indeed the support function of a convex body $K_\eps \in \K^{2}_{+,e}$.

\begin{proof}[Proof of Proposition \ref{prop:intro-local-minimum}]
Recall that $K \in \K^{2}_{+,e}$ and $z \in C^2_e(S^{n-1})$.
Thanks to multi-linearity of mixed volumes it is immediate to verify:
\[
\delta^1_K V(z) = n V_K(z) = n \int z dV_K  ~,~ \delta^2_K V(z) = n (n-1) V_K(z,z) . 
\]
\begin{enumerate}
\item
Let $\mu$ be a non-zero finite even Borel measure on $S^{n-1}$ and let $p \neq 0$. The first $C^2_e$-variation of $G_{\mu,p}$ is then:
\[
\delta^1_K G_{\mu,p}(z) = \frac{ p \int z h_K^p d\mu}{\int h_K^p d\mu} - \frac{p}{n} \frac{ n \int z dV_K}{V(K)} . 
\]
Consequently, we see that $\delta^1_K F(\mu,p) \equiv 0$ on $C^2_e(S^{n-1})$ iff $\delta^1_K G(\mu,p) \equiv 0$ on $C^2_e(S^{n-1})$ iff:
\[
0 = \int z \brac{\widetilde{(h_K^p d\mu)} - \widetilde{dV_K}} \;\;\; \forall z \in C^2_e(S^{n-1}) .
\]
Hence, by density of $C^2_e(S^{n-1})$ in $C_e(S^{n-1})$, since $\mu$ is even and $K$ is origin-symmetric we see that $K$ is a $C^2_e$-critical point of $F_{\mu,p}$ iff $V_K = c h_K^p \mu$ for some $c > 0$, or equivalently iff $S_p K = c n \mu$. 
\item
The second $C^2_e$-variation is:
\[
\delta^2_K G_{\mu,p}(z) = \frac{p(p-1) \int z^2 h_K^p d\mu}{\int h_K^p d\mu} - \frac{p^2 \brac{\int z h_K^p d\mu}^2}{\brac{\int h_K^p d\mu}^2} - \frac{p}{n} \brac{ \frac{n(n-1) V_K(z,z)}{V(K)} - \brac{ \frac{n \int z dV_K}{V(K)} }^2   } . 
\]
When $\mu = c \cdot S_p K$ (so that $h_K^p \mu = c n V_K$), this simplifies to:
\begin{align*}
\frac{V(K)}{p} \delta^2_K & G_{c \cdot S_p K,p}(z) = (p-1) \int z^2 dV_K - p \frac{(\int z dV_K)^2}{V(K)} - (n-1) V_K(z,z) + n \frac{(\int z dV_K)^2}{V(K)} \\
& = (n-1) \brac{ \int z^2 dV_K - V_K(z,z)} - (n-p) \brac{ \int z^2 dV_K - \frac{(\int z dV_K)^2}{V(K)}} \\
& = \int (-\Delta_K z) z dV_K - (n-p) \brac{ \int z^2 dV_K - \frac{(\int z dV_K)^2}{V(K)}} ,
\end{align*}
where we used (\ref{eq:Delta-V2}) in the last transition. It follows that, regardless of the sign of $p$, $\delta^2_K F_{S_p K ,p} \geq 0$ on $C^2_e(S^{n-1})$ iff $\frac{1}{p} \delta^2_K G_{S_p K ,p} \geq 0$ on $C^2_e(S^{n-1})$ iff $\lambda_{1,e}(-\Delta_K) \geq n-p$. A completely analogous proof holds when $p=0$, which we leave for the reader to verify.
\item
Finally, since $-\Delta_K$ is an unbounded operator, we see that we could never have $\frac{1}{p} \delta^2_K G_{S_p K ,p} \leq 0$, or equivalently $\delta^2_K F_{S_p K ,p} \leq 0$. Consequently, if $F_{\mu,p}$ had a local maximum at $K$, then by part (1), since $K$ is a critical point of $F_{\mu,p}$, then necessarily $S_p K = c \cdot \mu$ for some $c > 0$. Since we cannot have $\delta^2_K F_{S_p K ,p} \leq 0$, it follows that $K$ cannot be a local maximum point afterall. 
\end{enumerate}
 \end{proof}

\begin{proof}[Proof of Corollary \ref{cor:intro-local-minimum}]
Since $K_2 \in \K_e$ is a local minimum point of $F_{S_p K_1,p}$, Proposition \ref{prop:intro-EL} implies $S_p K_2 = c \cdot S_p K_1$ for some $c > 0$, and by rescaling $K_2$ we may ensure that $c = 1$ without altering its local minimality (by $0$-homogeneity of $F_{\mu,p}$). If $K_2 \notin K^2_{+,e}$,
 then already $K_2 \neq K_1$. Otherwise $K_2 \in \K^2_{+,e}$, and so it is a local minimum point of $F_{S_p K_2,p}$ under $C^2_e$ variations. It follows by Proposition \ref{prop:intro-local-minimum} (2) that $\lambda_{1,e}(-\Delta_{K_2}) \geq n-p$, and hence $K_2$ must differ from $K_1$ for which $\lambda_{1,e}(-\Delta_{K_1}) < n-p$. 
\end{proof}

\begin{proof}[Proof of Theorem \ref{thm:non-unique1}]
This follows immediately from Corollary \ref{cor:intro-local-minimum} and Theorem \ref{thm:main1}, by setting $q(K_1) := n - \lambda_{1,e}(-\Delta_{K_1}) \in (-n,1)$. 
\end{proof}

\begin{proof}[Proof of Theorem \ref{thm:non-unique2}]
This follows since $\underline{\lambda}_{1,e}(Q^n) = n$ by \cite[Theorem 10.2]{KolesnikovEMilman-LocalLpBM}, and so we may choose $Q_i \in \K^2_{+,e}$ so that $n - \lambda_{1,e}(-\Delta_{Q_i})$ is arbitrarily close to $0$, and conclude by Corollary \ref{cor:intro-local-minimum}.
\end{proof}

Before proceeding with the proof of Proposition \ref{prop:intro-diam}, and as already alluded to in the proof of Corollary \ref{cor:intro-local-minimum}, we first recall that, regardless of the value of $p$, any solution $K \in \K$ to:
\begin{equation} \label{eq:regularity}
S_p K = f \Leb ~,~  f \in C^\alpha(S^{n-1}) ~,~ f > 0 ,
\end{equation}
 necessarily satisfies $K \in \K^2_{+}$. Indeed, since $h_K$ is a-priori assumed to be positive (as the origin lies in the interior of $K \in \K$),  
the standard regularity theory implies that $h_{K} \in C^{2,\alpha}(S^{n-1})$ (see Caffarelli \cite{Caffarelli-StrictConvexity,CaffarelliHigherHolderRegularity}, as well as \cite[Proposition 1.2]{ChouWang-LpMinkowski} or \cite[Theorem 1.1]{BBC-SmoothnessOfLpMinkowski}). 
  In addition, $K$ must also have strictly positive curvature since $\det(D^2 h_{K}) = f h_K^{p-1} > 0$, and the assertion follows.

\begin{proof}[Proof of Proposition \ref{prop:intro-diam}]
Since $K_p \in \K_e$ is a local minimum point of $F_{\mu,p}$, we know by Proposition \ref{prop:intro-EL} that $S_p K_p = c_p \cdot \mu = c_p f \Leb$ for some $c_p > 0$. Our assumptions on $f$ and the regularity theory for (\ref{eq:regularity}) discussed above imply that $K_p \in \K^{2}_{+,e}$. Since  $\delta^2_{K_p} F_{S_p K_p,p} \geq 0$, Proposition \ref{prop:intro-local-minimum} (2) verifies $\lambdaC_{1,e}(K_p) = \lambda_{1,e}(-\Delta_{K_p}) \geq n-p$. Assume in the contrapositive that there is a sequence of $p_i \searrow -n$ with $d_G(K_{p_i},B_2^n) \leq C < \infty$. By the Blaschke selection theorem there exists a subsequence, which we continue to denote by $\{p_i\}$, and positive scaling coefficients $\{R_i\}$, so that $\tilde K_{p_i} := K_{p_i}/R_i$ are sandwiched between $B_2^n$ and $C B_2^n$, and converge in the Hausdorff metric to a compact set $\tilde K_{-n}$, which is clearly in $\K_e$.  Consequently $S_{\tilde K_{p_i}}$ weakly converges to $S_{\tilde K_{-n}}$, and hence
$S_{p_i} \tilde K_{p_i} = R_i^{p_i - n} S_{p_i} K_{p_i} = R_i^{p_i - n} c_{p_i} \mu$ weakly converges to $S_{-n} \tilde K_{-n}$. It follows that $c_{-n} := \lim_{i \rightarrow \infty} R_i^{p_i -n} c_{p_i}$ exists and is in $(0,\infty)$, and that $S_{-n} \tilde K_{-n} = c_{-n} \mu$. In particular, we deduce as before that $\tilde K_{-n} \in \K^2_{+,e}$, and the assumption that $f$ is non-constant implies that $\tilde K_{-n}$ is not an ellipsoid by (\ref{eq:ellipsoids-curvature}). 

On the other hand, by upper semi-continuity of $\lambdaC_{1,e}$ with respect to $C$-convergence, it follows that 
\[
\lambda_{1,e}(-\Delta_{\tilde K_{-n}}) = \lambdaC_{1,e}(\tilde K_{-n}) \geq \limsup_{i \rightarrow \infty} \lambdaC_{1,e}(\tilde K_{p_i}) = \limsup_{i \rightarrow \infty} \lambdaC_{1,e}( K_{p_i}) \geq 2n .
\]
As $\tilde K_{-n}$ is not an ellipsoid, this contradicts the equality case of Theorem \ref{thm:main1}, concluding the proof. 
\end{proof}

\begin{proof}[Proof of Theorem \ref{thm:intro-super-critical}]
\hfill
\begin{enumerate}
\item
Proposition \ref{prop:intro-EL} (3) already shows that $F_{\mu,p}$ cannot have any local maximum points which are in $\K^2_{+,e}$. 
\item
Assume in the contrapositive that $K \in \K^2_{+,e}$ is a local minimum of $F_{\mu,p}$. By Proposition \ref{prop:intro-EL} (or by Proposition \ref{prop:intro-local-minimum} (1)) we deduce that $S_p K = c \cdot \mu$ for some $c > 0$, and since $\delta^2_K F_{S_p K, p} \geq 0$, Proposition \ref{prop:intro-local-minimum} (2) implies that $\lambda_{1,e}(-\Delta_{K}) \geq n-p \geq 2n$. But this is impossible by Theorem \ref{thm:main1} unless $p=-n$ and $K$ is an ellipsoid $\EE$, in which case $c \cdot \mu = S_{-n} \EE = c' \Leb$ by (\ref{eq:ellipsoids-curvature}). 
\item
Assume in the contrapositive that $K \in \K_e$ is a local minimum of $F_{\mu,p}$. By Proposition \ref{prop:intro-EL} $S_p K = c \cdot \mu$ for some $c > 0$, and by the standard regularity theory for (\ref{eq:regularity}), it follows that $K \in \K^2_{+,e}$. The assertion now follows by part (2). 
\item
Write  $p = -(n + a)$ and $q = -(n + b)$ with $a > b > 0$. 
When $\mu = f \Leb$ with $\norm{f}_{L^{-\frac{n}{b}}(\Leb)} \in (0,\infty)$ then by the reverse H\"older inequality and polar integration on $K^{\circ}$:
\begin{align*}
 -(n+a) V(K)^{-\frac{n+a}{n}} F_{\mu,p}(K) & = \int_{S^{n-1}}  \frac{f}{h_K^{n+a}} d\Leb \geq \brac{\int_{S^{n-1}} f^{-\frac{n}{b}} d\Leb}^{-\frac{b}{n}} \brac{\int_{S^{n-1}} \frac{d\Leb}{h_K^{\frac{n+a}{n+b}n}}}^{\frac{n+b}{n}} \\
&  = \norm{f}_{L^{-\frac{n}{b}}(\Leb)} \brac{ c_{n,a,b}  \int_{K^{\circ}} \abs{x}^{\frac{a-b}{n+b} n} dx}^{\frac{n+b}{n}} . 
\end{align*}

Since $F_{\mu,p}$ is $0$-homogeneous with respect to scaling, to show coercivity of $-F_{\mu,p}$ under a volume constraint we may assume that $V(K)=V(B_2^n)$. By the reverse Blaschke--Santal\'o inequality due to Bourgain and V.~Milman \cite{BourgainMilman-ReverseSantalo}, $V(K^{\circ})$ is bounded below. Consequently, if in addition $d_G(K,B_2^n) = d_G(K^{\circ},B_2^n) \rightarrow \infty$ then it is elementary to see (using e.g. \cite[Lemma 2.2]{Klartag-LowM}) that the right-hand-side above tends to infinity as well, uniformly in $d_G(K,B_2^n)$. 

Since $\K_e \ni K \mapsto -F_{\mu,p}(K)$ is continuous with respect to the Hausdorff topology, and since $\{ K \in \K_e \; ; \; V(K) = V(B_2^n) ~,~ d_G(K,B_2^n) \leq C \}$ is compact, it follows that $F_{\mu,p}$ necessarily attains a global maximum $K_{\max} \in \K_e$. By part (1), we see that this maximum cannot be attained on $\K^2_{+,e}$. Finally, if $\mu$ satisfies the condition in (3), we cannot have that $S_p K_{\max} = c \cdot \mu$, since otherwise the regularity theory for (\ref{eq:regularity}) would imply that $K_{\max} \in \K^2_{+,e}$. 
\end{enumerate}
\end{proof}

\begin{rem} \label{rem:no-EL}
Applying Theorem \ref{thm:intro-super-critical} (4) to $\mu = \Leb$, one can deduce the well-known fact that there can be no $\K^2_{+,e}$ minimizer to the Mahler volume product $V(K) V(K^{\circ})$ (see \cite{RSW-NoCurvatureInMahler} for a much stronger result).  
\end{rem}

\begin{rem}
Our condition in Theorem \ref{thm:intro-super-critical} (4) which ensures that there is no global minimum to $F_{\mu,p}$, namely $\norm{f}_{L^{\frac{n}{n+q}}(\Leb)} \in (0,\infty)$ for some $q \in (p,-n)$, complements the conditions in \cite[Theorem 1.5]{BBCY-SubcriticalLpMinkowski} and \cite[Theorem 5.1]{LuWang-CriticalAndSupercriticalLpMinkowski} which do ensure the existence of a global minimum in the subcritical and supercritical regimes, respectively. 
\end{rem}

\begin{rem} \label{rem:symmetries}
It is interesting to compare our Proposition \ref{prop:intro-diam} and Theorem \ref{thm:intro-super-critical} (3) with \cite[Lemma 4.6]{LuWang-CriticalAndSupercriticalLpMinkowski} and \cite[Section 3]{JianLuZhu-UnconditionalCriticalLpMinkowski}, respectively, since at first sight they seem contradictory to each other. In those papers, the authors were able to obtain existence results in the centro-affine Minkowski problem for the critical exponent $p=-n$ under certain symmetry assumptions on the density $f$ of $\mu$ and the solution $K \in \K^2_{+,e}$. Denoting by $\K_G \subset \K_e$ the subset of convex bodies invariant under a fixed group of symmetries $G$ (which includes origin-symmetry), these authors considered a minimizing sequence $K_i \in \K_G$ for the $L^{-n+\delta_i}$ and $L^{-n}$-Minkowski problems, respectively, and showed that under appropriate conditions, $K_i$ converge in the Hausdorff metric to the desired solution $K \in \K_G$. The point is that these $K_i$ are not global nor local minimizers on $\K_e$ but only on $\K_G$, and hence our results for local minimizers on $\K_e$ do not apply. 
\end{rem}

\bibliographystyle{plain}
\bibliography{../../../ConvexBib}

\end{document}